\documentclass[11pt,a4paper]{amsart}

\usepackage{amssymb}
\usepackage{amsthm}
\usepackage{amsmath}
\usepackage{mathrsfs}

\def\tych{{3\hspace{-.1em}{\frac{\raise-.5pt\hbox{$\scriptscriptstyle 1$}}{\raise2pt\hbox{$\scriptscriptstyle 2$}}}}}

\newtheorem{theorem}{Theorem}
\newtheorem*{theoremA}{Theorem A}
\newtheorem*{theoremB}{Theorem B}
\newtheorem*{theoremC}{Theorem C}
\newtheorem*{theoremD}{Theorem D}
\newtheorem*{corollary*}{Corollary} 
\newtheorem{proposition}{Proposition} 

\theoremstyle{definition} 
\newtheorem{definition}{Definition}
\newtheorem{example}{Example}
\newtheorem*{problem*}{Problem}

\theoremstyle{remark}
\newtheorem{remark}{Remark}

\begin{document}

\title{Free Topological Mal'tsev Algebras}

\author{Ol'ga~V.~Sipacheva}

\email{ovsipa@gmail.com}

\author{Aleksandr~A.~Solonkov}

\email{456700@bk.ru}

\address{Department of General Topology and Geometry, Faculty of Mechanics and Mathematics, 
Lomonosov Moscow State University, Moscow, Russia}

\begin{abstract}
A brief introduction to universal algebra and the theory of topological algebras, their varieties, 
and free topological algebras is presented. Free topological Mal'tsev algebras are studied. Their properties, 
relationship with topological groups and heaps, and the problem of direct limit decomposition are considered. 
\end{abstract}

\keywords{
Universal algebra, 
variety of topological algebras, 
free topological algebra, 
topological Mal'tsev algebra, 
free topological Mal'tsev algebra, 
topological heap}

\subjclass[2020]{54H10, 22A30, 
08B05, 08B20} 

\maketitle

In 1954 \cite{Maltsev1954} Mal'tsev proved that a variety $\mathscr V$ of universal algebras is 
\emph{congruence-permutable} (i.e., any two congruences of any algebra in $\mathscr V$ commute under composition) 
if and only if there exists a term $\mu$ with three variables  
in the language of $\mathscr V$ such that the equations 
$$
x=\mu(x,y,y)=\mu(y,y,x)\eqno(\mathrm M)
$$ 
hold identically in $\mathscr V$. This theorem had a great impact on the further development of universal 
algebra. 

In this paper, we refer to $\mu$ as a \emph{Mal'tsev term} and to an operation satisfying condition 
$(\mathrm M)$ as a \emph{Mal'tsev operation} and study \emph{topological Mal'tsev algebras}, that is, topological 
spaces with a continuous Mal'tsev operation. We begin with a brief introduction to the theory of topological 
universal algebras, their varieties, and free topological algebras, prove some fundamental properties of 
topological Mal'tsev algebras, and study in detail free topological algebras in the full variety of topological 
Mal'tsev algebras. We also consider a relationship between topological Mal'tsev algebras and topological heaps 
and discuss conditions under which a topological space is a retract of a topological group and some related 
questions.

It turns out that Mal'tsev's theorem remains valid for varieties of \emph{topological} algebras. The following 
theorem readily follows from results of Mal'tsev and Taylor; its proof is given in Section~\ref{sec2}.

\begin{theoremA}
A variety of topological algebras is congruence-permutable if and only if there exists a Mal'tsev term in 
the language of this variety.
\end{theoremA}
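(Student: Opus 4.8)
The plan is to split the equivalence into its two implications, disposing of the algebraically trivial direction by a direct element chase and reducing the substantive direction to Mal'tsev's classical theorem by passing to the discrete members of the variety, where Taylor's correspondence does the work.

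First I would settle sufficiency. Suppose $\mu$ is a Mal'tsev term in the language of $\mathscr V$, so $\mu(x,y,y)\approx x$ and $\mu(y,y,x)\approx x$ hold identically. Let $\theta,\phi$ be congruences of an algebra $A\in\mathscr V$ and suppose $a\mathrel\theta b\mathrel\phi c$. Put $d=\mu^A(a,b,c)$. From $b\mathrel\phi c$ we get $a=\mu^A(a,b,b)\mathrel\phi\mu^A(a,b,c)=d$, and from $a\mathrel\theta b$ we get $d=\mu^A(a,b,c)\mathrel\theta\mu^A(b,b,c)=c$, so $(a,c)\in\phi\circ\theta$. Hence $\theta\circ\phi\subseteq\phi\circ\theta$, and symmetry gives equality. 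This argument never touches the topology — it uses only that $\mu$ is respected by congruences — so it is verbatim the discrete case.

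For necessity I would descend to the discrete members of $\mathscr V$. Let $\Sigma$ be the set of identities valid in $\mathscr V$. By Taylor's results the discrete algebras in $\mathscr V$ are exactly the abstract models of $\Sigma$; they form an ordinary variety $\mathscr V_\delta$, and on a discrete algebra every abstract congruence is a topological congruence whose (discrete) quotient again lies in $\mathscr V$. Therefore congruence-permutability of $\mathscr V$ forces congruence-permutability of $\mathscr V_\delta$. Now I apply Mal'tsev's original construction inside $\mathscr V_\delta$: in $F=F_{\mathscr V_\delta}(x,y,z)$ the principal congruences $\theta=\mathrm{Cg}(x,y)$ and $\phi=\mathrm{Cg}(y,z)$ give $(x,z)\in\theta\circ\phi$, whence by permutability $(x,z)\in\phi\circ\theta$, so there is $w\in F$ with $x\mathrel\phi w\mathrel\theta z$. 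Since $F$ is the discrete free algebra, $w$ is represented by an honest ternary term $\mu$, and collapsing the two generating pairs (the maps $z\mapsto y$ and $x\mapsto y$) turns $x\mathrel\phi w$ and $w\mathrel\theta z$ into the two identities of $(\mathrm M)$, so $\mu$ is a Mal'tsev term for $\mathscr V_\delta$.

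It remains to lift these identities from $\mathscr V_\delta$ to $\mathscr V$, and here I would invoke Taylor's correspondence once more: as $\mathscr V_\delta$ is precisely the class of abstract models of $\Sigma$, an identity holds in $\mathscr V_\delta$ iff it is a consequence of $\Sigma$ iff it holds in $\mathscr V$; thus the identities $(\mathrm M)$ hold throughout $\mathscr V$ and $\mu$ is the desired term. The one genuinely delicate point — the place where topology could in principle wreck the argument — is the extraction of $w$: in a truly topological free algebra elements are only limits of terms, and a limit of terms need not be a term, so no single $\mu$ would emerge. Routing everything through the discrete free algebra $F_{\mathscr V_\delta}(x,y,z)$, in which every element \emph{is} a term, is exactly what avoids this obstacle, and verifying that this discrete algebra belongs to $\mathscr V$ and inherits permutability is the crux contributed by Taylor's work; the term-extraction itself is then pure Mal'tsev.
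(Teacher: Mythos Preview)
Your approach is essentially the paper's: observe that congruence-permutability is a purely algebraic condition, pass to the associated abstract variety via Taylor, and invoke Mal'tsev's classical theorem. The paper does this in two sentences by citing Taylor's result that $\overline{\mathscr V}=\{A:\ (A,\tau)\in\mathscr V\text{ for some }\tau\}$ is an abstract variety and then citing Mal'tsev's theorem for $\overline{\mathscr V}$; you unpack both citations, giving the element chase for sufficiency and the free-algebra construction for necessity.

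One point deserves tightening. You assert that ``the discrete algebras in $\mathscr V$ are exactly the abstract models of $\Sigma$,'' and you use the discrete picture to argue that quotients stay in $\mathscr V$ and hence that permutability descends to $\mathscr V_\delta$. The inclusion from left to right is trivial, but the converse (that every model of $\Sigma$, equipped with the discrete topology, lies in $\mathscr V$) is not what Taylor proves and need not hold for a non-full Taylor variety. What Taylor does show --- and what the paper uses --- is that every model of $\Sigma$ carries \emph{some} topology placing it in $\mathscr V$ (this is precisely the statement that $\overline{\mathscr V}$ is the variety cut out by $\Sigma$). That is already enough: if $A$ is a model of $\Sigma$, pick $\tau$ with $(A,\tau)\in\mathscr V$; then $(A,\tau)$ has permuting congruences by hypothesis, and since the congruence lattice ignores $\tau$, so does $A$. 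With this adjustment (replace ``discrete'' by ``some topology in $\mathscr V$''), your transfer of permutability to $\mathscr V_\delta$ goes through cleanly, and the rest --- the extraction of $\mu$ from $F_{\mathscr V_\delta}(x,y,z)$ and the lifting of the identities back to $\mathscr V$ --- is exactly Mal'tsev's argument and is correct as written.
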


The permutability of congruences in a variety of topological algebras is also equivalent to following important 
property, which makes it possible to form topological quotients of algebras in that variety. Its proof is given 
in Section~\ref{sec2}. 

\begin{theoremB}
If a variety $\mathscr V$ of topological algebras is congruence-permutable, then, 
given any congruence $\sim$ on any algebra $A\in \mathscr V$, 
the $\sim$-saturation of any open subset of $A$ is open. 
\end{theoremB}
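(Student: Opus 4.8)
The plan is to reformulate the statement as the openness of the quotient map and then exploit the Mal'tsev term supplied by Theorem~A. Recall that the $\sim$-saturation of a set $U\subseteq A$ is $q^{-1}(q(U))$, where $q\colon A\to A/{\sim}$ is the quotient map; so the assertion is exactly that $q$ is an open map. By Theorem~A, congruence-permutability of $\mathscr V$ provides a Mal'tsev term $\mu$ satisfying $\mu(x,y,y)=\mu(y,y,x)=x$ identically, realized by a continuous ternary operation on $A$. The whole argument will rest on using $\mu$ to build, for each point of the saturation, a continuous self-map of $A$ that carries a neighbourhood of that point into $U$ while preserving congruence classes.

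Concretely, I would fix an open set $U$ and a point $a$ in its saturation, so that $a\sim u$ for some $u\in U$, and then consider the map $\phi\colon A\to A$ given by $\phi(z)=\mu(z,a,u)$. This $\phi$ is continuous, since $\mu$ is continuous and $z\mapsto(z,a,u)$ is continuous into $A^3$. Evaluating at $z=a$ and using $\mu(y,y,x)=x$ gives $\phi(a)=\mu(a,a,u)=u\in U$. As $U$ is open and $\phi$ is continuous, there is an open neighbourhood $V$ of $a$ with $\phi(V)\subseteq U$.

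It then remains to check that $V$ lies in the saturation of $U$, which is where the congruence identities do the work. For any $z\in V$, compatibility of $\sim$ with $\mu$ together with $a\sim u$ (and reflexivity in the remaining two coordinates) gives $\mu(z,a,u)\sim\mu(z,u,u)$, while the identity $\mu(x,y,y)=x$ yields $\mu(z,u,u)=z$. Hence $z\sim\phi(z)$ with $\phi(z)\in U$, so $z$ is congruent to an element of $U$ and therefore belongs to the saturation. Thus $V$ is an open neighbourhood of $a$ contained in the saturation, and since $a$ is arbitrary, the saturation of $U$ is open.

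I do not expect a genuine obstacle here: the only real choice is the map $\phi(z)=\mu(z,a,u)$, and once it is written down both the topological step (continuity together with openness of $U$) and the algebraic step (the two Mal'tsev identities and compatibility of the congruence) are routine. The one point to be careful about is that a \emph{congruence} on a topological algebra is required only to be an operation-compatible equivalence relation, so that no further hypothesis on $\sim$ is needed beyond what $\mu$ and continuity already supply.
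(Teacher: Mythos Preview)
Your argument is correct. The only thing worth checking is that the term $\mu$ induces a \emph{continuous} map $A^3\to A$; this is fine because terms are built from basic operations and projections by composition, and all of these are continuous.

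The paper, however, does not give this direct argument: it simply invokes Taylor's result \cite[Theorem~2.1]{Taylor}, after observing that congruence-permutability of $\mathscr V$ is equivalent to that of the associated abstract variety $\overline{\mathscr V}$. In other words, the paper treats Theorem~B as a corollary of a cited theorem, whereas you supply the actual mechanism---the translation $\phi(z)=\mu(z,a,u)$ together with the two Mal'tsev identities---that makes the saturation open. Your approach is more self-contained and makes explicit why the Mal'tsev term is exactly what is needed (continuity to produce a neighbourhood, congruence compatibility plus the identities to keep the neighbourhood inside the saturation); the paper's approach is shorter but opaque, and in fact your argument is essentially the proof behind Taylor's theorem.
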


\begin{corollary*}
Given any topological algebra $A$ in a 
congruence-permutable variety, its topological quotient  by any congruence $\sim$ is defined, i.e., the 
operations of $A/{\sim}$ are continuous with respect to the quotient topology. Moreover, on the abstract quotient 
$A/{\sim}$,  there exists a unique topology with respect to which all operations of $\mathscr V$ are continuous 
and the canonical projection $A\to A/{\sim}$ is a continuous open homomorphism. 
\end{corollary*}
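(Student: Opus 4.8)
The plan is to equip the abstract quotient $A/{\sim}$ with the quotient topology $\tau_q$ (declaring $V\subseteq A/{\sim}$ open precisely when $\pi^{-1}(V)$ is open in $A$, where $\pi\colon A\to A/{\sim}$ is the canonical projection) and then to verify that this topology does everything required. Continuity of $\pi$ is immediate from this definition. The first substantive step is to observe that $\pi$ is \emph{open}: for open $U\subseteq A$ the set $\pi^{-1}(\pi(U))$ is exactly the $\sim$-saturation of $U$, which is open by Theorem~B, so $\pi(U)$ is $\tau_q$-open.

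Next I would establish continuity of the induced operations. For an $n$-ary operation $\omega$ of $\mathscr V$, the fact that $\sim$ is a congruence gives the commuting square $\pi\circ\omega_A=\omega_{A/{\sim}}\circ\pi^n$, where $\omega_A$ and $\omega_{A/{\sim}}$ denote the operations on $A$ and on the quotient algebra. The key device is that $\pi^n\colon A^n\to(A/{\sim})^n$ is itself open, being a finite product of the open map $\pi$; since it is also a continuous surjection, it is a quotient map. Hence, for $\tau_q$-open $V\subseteq A/{\sim}$, the set $\omega_{A/{\sim}}^{-1}(V)$ is open in $(A/{\sim})^n$ if and only if its $\pi^n$-preimage is open in $A^n$. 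But that preimage equals $\omega_A^{-1}(\pi^{-1}(V))$, which is open because $\omega_A$ is continuous and $\pi^{-1}(V)$ is open. This shows $\omega_{A/{\sim}}$ is continuous, so $A/{\sim}$ with $\tau_q$ is a topological algebra in $\mathscr V$ and $\pi$ is a continuous open homomorphism.

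For uniqueness, suppose $\tau$ is any topology on $A/{\sim}$ making all operations continuous and $\pi$ a continuous open homomorphism. Continuity of $\pi$ with respect to $\tau$ forces every $\tau$-open set to have open preimage in $A$, hence to be $\tau_q$-open, so $\tau\subseteq\tau_q$. Conversely, if $V$ is $\tau_q$-open then $\pi^{-1}(V)$ is open in $A$, and since $\pi$ is $\tau$-open and surjective, $V=\pi(\pi^{-1}(V))$ is $\tau$-open; thus $\tau_q\subseteq\tau$. Hence $\tau=\tau_q$, which is the asserted uniqueness.

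The main obstacle is the continuity of the quotient operations, and the crucial ingredient that resolves it is Theorem~B: it is precisely the openness of $\pi$ (and therefore of each $\pi^n$) that upgrades $\pi^n$ from a mere continuous surjection to a quotient map, which is what licenses the preimage computation above. Without openness one cannot in general conclude that a product of quotient maps is a quotient map, so Theorem~B is doing the essential work here, while the remaining verifications are routine diagram chasing.
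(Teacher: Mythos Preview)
Your proof is correct and complete. The paper itself does not give a detailed argument for this corollary, remarking only that ``the simple proof of this corollary is similar to that of Corollary~2.2 in~\cite{Taylor}''; your write-up is precisely that standard argument, using Theorem~B to obtain openness of $\pi$, then openness (hence quotientness) of $\pi^n$ to push continuity of the operations down to $A/{\sim}$, and finally the elementary two-inclusion argument for uniqueness.
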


The simple proof of this corollary is similar to that of Corollary~2.2 in~\cite{Taylor}.

Other important consequences of the permutability of congruences are as follows. 

\begin{theoremC}[{\hspace{1sp}\cite[Corollary~2.7]{Taylor}}]
Every $T_0$ topological 
algebra in a congruence-permutable variety of topological algebras is Hausdorff. 
\end{theoremC}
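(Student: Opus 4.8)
The plan is to give a direct neighborhood argument using the continuous Mal'tsev operation, bypassing any quotient machinery. By Theorem~A the congruence-permutable variety carries a Mal'tsev term $\mu$, which on the given algebra $A$ is a continuous ternary operation $\mu\colon A^3\to A$ satisfying $(\mathrm M)$; in particular $\mu(x,y,y)=x$ and $\mu(y,y,x)=x$ hold identically. I would prove the Hausdorff property directly: for any two distinct points $a,b\in A$ I produce disjoint open neighborhoods.

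First I would invoke $T_0$: since $a\neq b$, some open set separates them on one side, and by the symmetry of the roles of $a$ and $b$ I may assume there is an open $U$ with $a\in U$ and $b\notin U$. Next I would use continuity of $\mu$ at the triple $(a,b,b)$. Because $\mu(a,b,b)=a\in U$ by the first identity in $(\mathrm M)$, continuity yields basic open sets $W_1\ni a$, $W_2\ni b$, $W_3\ni b$ with $\mu(W_1\times W_2\times W_3)\subseteq U$.

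The key step is then to show that $W_1\cap W_2=\varnothing$, so that $W_1$ and $W_2$ are the desired disjoint neighborhoods of $a$ and $b$. Suppose on the contrary that some point $p$ lies in $W_1\cap W_2$. For every $w\in W_3$ the second identity in $(\mathrm M)$ gives $\mu(p,p,w)=w$, and since $(p,p,w)\in W_1\times W_2\times W_3$ this forces $w=\mu(p,p,w)\in U$. Hence $W_3\subseteq U$; but $b\in W_3$ and $b\notin U$, a contradiction. Therefore $W_1\cap W_2=\varnothing$, and $A$ is Hausdorff. (In the opposite case, where $b\in U$ and $a\notin U$, one evaluates $\mu$ at $(b,a,a)$ instead and argues symmetrically.)

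I expect the only real obstacle to be the discovery of the right evaluation point, since the verification is then a two-line check. Everything hinges on evaluating $\mu$ at $(a,b,b)$ and collapsing the first two arguments to a common point of $W_1\cap W_2$; the identity $\mu(y,y,x)=x$ then transports the whole of $W_3$ into $U$, which is incompatible with $b\in W_3\setminus U$. As an alternative, more structural route one can verify that the closure $\overline{\Delta}$ of the diagonal is a reflexive compatible relation, hence, by the Mal'tsev identities, a congruence; the Corollary then makes $A/\overline{\Delta}$ a topological algebra with open projection and closed diagonal, so $A/\overline{\Delta}$ is Hausdorff, and one reduces to showing that $T_0$ forces $\overline{\Delta}=\Delta$. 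I would, however, prefer the direct argument above, as it needs neither permutability beyond the existence of $\mu$ nor Theorem~B.
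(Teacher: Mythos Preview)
Your direct argument is correct: the continuity of $\mu$ at $(a,b,b)$ together with the two Mal'tsev identities does exactly what you claim, and the contradiction via $W_3\subseteq U$ is clean. The symmetric case is handled just as you indicate.

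As for comparison with the paper: there is nothing to compare against. The paper does not prove Theorem~C; it merely records the statement and attributes it to Taylor~\cite[Corollary~2.7]{Taylor}. Your proposal therefore supplies strictly more than the paper does on this point. The structural alternative you sketch at the end (showing that $\overline{\Delta}$ is a congruence and then arguing via the quotient) is closer in spirit to Taylor's framework of closed congruences, and is essentially how the result sits inside the machinery surrounding Theorems~B and~D; your direct neighborhood argument is more self-contained and avoids invoking Theorem~B or the Corollary, needing only the existence of the Mal'tsev term from Theorem~A.
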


(Curiously, the condition that all $T_0$ algebras in a full variety of topological algebras are Hausdorff is 
equivalent to the existence of certain terms satisfying certain identities in that variety~\cite{KS}.)

\begin{theoremD}[{see \cite[Theorem~11]{Maltsev1954}, \cite[Exercise~2.4.C\,(c)]{Engelking}, and  
\cite[Corollary~2.4]{Taylor}}] 
Let $\mathscr V$ be a congruence-permutable variety of topological algebras. Then, for any $A\in \mathscr V$ and 
any congruence $\theta$ on $A$, the following conditions are equivalent:
\begin{itemize}
\item 
the quotient algebra $A/\theta$ is Hausdorff;
\item 
all equivalence classes of $\theta$ are closed in $A$; 
\item 
the relation $\theta$ (that is, the kernel of the canonical homomorphism $A\to A/\theta$) is closed in $A\times 
A$.
\end{itemize}
Moreover, if $A$ is a subalgebra of $B\in \mathscr V$, then the closure $\overline \theta$ of $\theta$ in 
$B\times B$ is a congruence on the closure of $A$ in $B$.
\end{theoremD}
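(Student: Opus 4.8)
The plan is to prove the three-way equivalence by the cycle (1)$\Rightarrow$(3)$\Rightarrow$(2)$\Rightarrow$(1), and then treat the ``moreover'' clause separately. Write $\pi\colon A\to A/\theta$ for the canonical projection. For (1)$\Rightarrow$(3) I would invoke the standard criterion that a space is Hausdorff iff its diagonal is closed in the square: since $\theta=(\pi\times\pi)^{-1}(\Delta_{A/\theta})$ and $\pi\times\pi$ is continuous, Hausdorffness of $A/\theta$ (closedness of $\Delta_{A/\theta}$) pulls back to closedness of $\theta$ in $A\times A$. For (3)$\Rightarrow$(2) I would note that each $\theta$-class is a slice $\{b\in A:(a,b)\in\theta\}$, i.e. the preimage of $\theta$ under the continuous map $b\mapsto(a,b)$, hence closed whenever $\theta$ is. For (2)$\Rightarrow$(1) the key input is the Corollary to Theorem~B: $\pi$ is a continuous open surjection, so it is a quotient map, and consequently a singleton $\{\pi(a)\}$ is closed in $A/\theta$ exactly when its preimage, the class of $a$, is closed in $A$. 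Thus (2) forces every point of $A/\theta$ to be closed, so $A/\theta$ is $T_1$, in particular $T_0$, and Theorem~C upgrades this to Hausdorff, yielding (1).

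For the moreover clause, I would first record two routine facts about topological algebras: the closure of a subalgebra is again a subalgebra (for any basic operation $\omega$ of arity $n$, continuity gives $\omega(\overline{A}^{\,n})\subseteq\overline{\omega(A^n)}\subseteq\overline A$), and the closure of a product is the product of the closures. Since the congruence $\theta$ is in particular a subalgebra of $A\times A\subseteq B\times B$, its closure $\overline\theta$ is a subalgebra of $B\times B$ lying inside $\overline{A\times A}=\overline A\times\overline A$; hence $\overline\theta$ is a compatible relation on the subalgebra $\overline A$. Reflexivity of $\overline\theta$ on $\overline A$ follows from continuity of the diagonal map, which gives $\Delta_{\overline A}\subseteq\overline{\Delta_A}\subseteq\overline\theta$.

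It remains to verify that the reflexive compatible relation $\overline\theta$ is symmetric and transitive. Here I would use the Mal'tsev term $\mu$ furnished by Theorem~A rather than any topological argument, and this is the step I expect to be the crux: composition of relations does not commute with closure, so a direct topological proof of transitivity of $\overline\theta$ would be delicate. The Mal'tsev identities dissolve the difficulty by working purely algebraically inside the subalgebra $\overline\theta$. For symmetry, from $(a,a),(a,b),(b,b)\in\overline\theta$ one computes $\mu((a,a),(a,b),(b,b))=(\mu(a,a,b),\mu(a,b,b))=(b,a)$, so $(b,a)\in\overline\theta$; for transitivity, from $(a,b),(b,b),(b,c)\in\overline\theta$ one computes $\mu((a,b),(b,b),(b,c))=(\mu(a,b,b),\mu(b,b,c))=(a,c)$, so $(a,c)\in\overline\theta$. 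Therefore $\overline\theta$ is an equivalence relation compatible with all operations, i.e. a congruence on $\overline A$, which completes the proof.
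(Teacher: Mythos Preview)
The paper does not give its own proof of Theorem~D; it is stated with references to \cite{Maltsev1954}, \cite{Engelking}, and \cite{Taylor} and used as a known result. Your argument is correct and is essentially the standard one found in those sources: the implications (1)$\Rightarrow$(3)$\Rightarrow$(2) are pure general topology, while (2)$\Rightarrow$(1) is exactly where congruence permutability enters, via the openness of $\pi$ (the Corollary to Theorem~B) together with Theorem~C. Your treatment of the ``moreover'' clause, using the Mal'tsev term on the subalgebra $\overline\theta\subseteq\overline A\times\overline A$ to recover symmetry and transitivity, is precisely Mal'tsev's original device from \cite[Theorem~11]{Maltsev1954}; as you note, this sidesteps the failure of closure to commute with relational composition.
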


In what follows, we use the standard symbols $\mathbb R$, $\mathbb Q$, $\mathbb N$, and $\mathbb N_0$ for the 
sets of real numbers, rational numbers, positive integers, and nonnegative integers, respectively. By $|X|$ we 
denote the cardinality of a set $X$. Details concerning notions and facts related to topology can be found 
in \cite{Engelking} and to algebra, in \cite{Gratzer}. 

\section{Universal algebras}
\label{sec1}

Let $A$ be any nonempty set. Given a nonnegative integer $n$, a map $A^n \to A$ is called an 
\emph{$n$-ary operation} on $A$. In particular, a \emph{nullary operation} is a map $\{\varnothing\}\to A$, 
which is  identified with a fixed element of~$A$. A \emph{universal algebra}, or simply \emph{algebra}, is a 
nonempty set $A$ together with any family of operations on~$A$. A more formal definition is as follows. 

Suppose given a set $ \Sigma$ (of symbols of operations) together with a function $\nu\colon \Sigma\to \mathbb 
N_0$ assigning arity to each symbol in $\Sigma$. For each $n\ge 0$, let $\Sigma_n$ denote the set of symbols in 
$\Sigma$ to which the arity $n$ is assigned, so that $\Sigma=\bigcup\{\Sigma_n: n\ge 0\}$. In the context of 
universal algebras, such a set $\Sigma$ together with $\nu$ is called a \emph{signature}, a \emph{type}, or a 
\emph{language}. A \emph{$\Sigma$-algebra structure} on $A$ is a family of maps $\Sigma_n\to A^{A^n}$, $n\ge 0$, 
which associate each symbol $\sigma\in \Sigma_n$ with an operation $\sigma^A\colon A^n\to A$ on $A$ for every 
$n$; the operations $\sigma^A$ are said to be \emph{basic}. The set $A$ with this structure is called a 
\emph{$\Sigma$-algebra} or a (\emph{universal}) \emph{algebra of signature} (or \emph{type}) $\Sigma$. In what 
follows, when there is no danger of confusion, we often omit the superscript $A$ and use the same notation for 
elements of $\Sigma$ (that is, symbols of operations) and the corresponding operations on the algebra. 

Given $\Sigma$-algebras $A$ and $B$, a map $f\colon A \to B$, and $\sigma\in \Sigma_n$, we say
that $f$ is \emph{compatible} with $\sigma$ if 
$$
\sigma^B(f(x_1), \dots, f(x_n))= f(\sigma^A(x_1,\dots, x_n))
$$
for any $x_1, \dots, x_n\in A$. A map compatible with all $\sigma\in \Sigma$ is called a \emph{homomorphism}. A  
homomorphism  with an inverse which is also a homomorphism is called an \emph{isomorphism}. The \emph{kernel} 
of a homomorphism $f\colon A\to B$ is defined by 
$$
\ker h= \{(x,y): (x,y)\in A^2, f(x)=f(y)\}.
$$

In universal  algebra a crucial role is played by special equivalence relations called congruences. In what 
follows, given an equivalence relation $\sim$ on a set $X$ and an $x\in X$, by $[x]_{\sim}$ we denote the 
equivalence class of $\sim$ containing $x$. 

An equivalence relation $\sim$ on a $\Sigma$-algebra $A$ is \emph{compatible} with an operation $\sigma\in 
\Sigma_n$ if $\sigma(x_1, \dots, x_n)\sim \sigma(y_1,\dots, y_n)$ whenever $x_i\sim y_i$ for all $i\le n$. An 
equivalence relation compatible with all operations is called a \emph{congruence}. It is easy to see that the 
quotient set of $A$ by any congruence $\sim$ is a $\Sigma$-algebra with operations defined by 
$\sigma([x_1]_{\sim}, \dots, [x_n]_{\sim})= [\sigma(x_1), \dots,\sigma(x_n)]_{\sim}$, where $\sigma \in 
\Sigma_n$, $n\ge0$, the canonical projection is a homomorphism, and $\sim$ is the kernel of this homomorphism. 
Moreover, the kernel of each homomorphism $h\colon A\to B$ is a congruence and $B$ is isomorphic to the quotient 
of $A$ by this congruence (see, e.g.,~\cite{Gratzer}). 

For any signature $\Sigma$ and any set $S$ of symbols (representing variable symbols), \emph{terms} or 
\emph{words} over $S$ are formal expressions composed of symbols in $\Sigma$ and $S$. Formally, terms and their 
variables are defined recursively as follows: 
\begin{itemize} 
\item 
each $x\in S$ is a term, and its variable is $x$; 
\item 
if $\sigma\in \Sigma_n$ and  
$t_1, \dots, t_n$ are terms, then $\sigma(t_1,\dots, t_n)$ is a term, and its variables are all variables of 
$t_1, \dots, t_n$. In particular, any $\sigma\in \Sigma_0$ is a term with no variables. 
\end{itemize} 

The notion of a \emph{subalgebra} of a given $\Sigma$-algebra is naturally defined as a subset $B$ of $A$ such 
that $\sigma(B^n)\subset B$ for any $n\ge 0$ and $\sigma \in \Sigma_n$. A $\Sigma$-algebra  $A$ is 
\emph{generated} by a set $X\subset A$ if any subalgebra of $A$ containing $X$ coincides with~$A$. 

An \emph{identity} over $\Sigma$ is a formula  $\forall x_1\forall x_2\dots\forall x_n (t=s)$, where 
$x_1,\dots,x_n$ are symbols of variables and $t$ and $s$ are terms all of whose variables are among 
$x_1, x_2, \dots, x_n$. 

A \emph{variety} is any class of algebras of the same signature closed under the formation of Cartesian products, 
subalgebras, and quotients. According to Birkhoff's theorem, a class $\mathscr V$ of $\Sigma$-algebras of the 
same signature is a variety if and only if there exists a set of identities over $\Sigma$ such that $\mathscr V$ 
consists of precisely those $\Sigma$-algebras in which all identities in this set hold.

A \emph{free algebra} in 
$\mathscr V$ on a set $X$, or an algebra \emph{freely generated} by $X$ in $\mathscr V$, is an algebra $A(X)\in 
\mathscr V$ with the following \emph{universal property}: Given any $A\in \mathscr V$ and any map $f\colon X\to 
A$, there exists a unique homomorphism $A(X)\to A$ extending $f$.  It is well known that $A(X)$ is indeed 
generated by $X$, that all free algebras on $X$ are isomorphic, and that every algebra in $\mathscr V$ generated 
by $X$ is a quotient of~$A(X)$. 

We denote the variety of all $\Sigma$-algebras by $\mathscr U(\Sigma)$.

Any nonempty set $X$ generates the \emph{absolutely free $\Sigma$-algebra} $W(X)$ (known also as \emph{term} or 
\emph{word algebra}) consisting of all terms over $ \Sigma$ with variables in $X$. This algebra is the free 
algebra on $X$ in the variety $\mathscr U(\Sigma)$ of all $\Sigma$-algebras (so that any $\Sigma$-algebra 
generated by $X$ is its quotient). Therefore, each term $t$ with $k$ variables naturally defines a $k$-ary 
operation $t^A\colon A^k\to A$ on any $\Sigma$-algebra $A$. We refer to such operations as \emph{derived 
operations} or \emph{polynomials}. Fixing all but one variables of a basic operation, we obtain a map 
$A\to A$; following Mal'tsev \cite{Maltsev1954}, we refer to such maps as \emph{principal translations} and to 
their finite compositions as \emph{elementary translations}. All elementary translations form a semigroup under 
composition.

Given a set $\Sigma'$ of derived operations, we can treat any $\Sigma$-algebra 
$A$ as a $\Sigma'$-algebra $A'$ in a natural way (it coincides with $A$ as a set but has different basic 
operations); the algebra $A'$ is called a \emph{derived $\Sigma'$-algebra} of $A$, and the variety generated by 
all derived $\Sigma'$-algebras of all $\Sigma$-algebras in a given variety $\mathscr V$ is called the 
\emph{derived variety} of~$\mathscr V$.

\section{Varieties of topological algebras}
\label{sec2}

A \emph{topological $\Sigma$-algebra} is a $\Sigma$-algebra $A$ with a topology with respect to which all 
operations $\sigma\colon A^n\to A$, where  $n\ge 0$ and  $\sigma \in \Sigma_n$, are continuous. A 
\emph{topological subalgebra} of a topological algebra $A$ is a subalgebra endowed with the induced topology, and 
the \emph{topological quotient} of $A$ by a congruence $\sim$ is $A/\sim$ with the quotient topology (provided 
that the operations are continuous with respect to this topology). If the underlying abstract algebra of $A$ 
belongs to a congruence-permutable variety, then, by Theorem~B, the topological quotient of $A$ by any congruence 
exists and the canonical projection is a continuous open homomorphism. 

There are several approaches to defining varieties of topological algebras: they may be defined as 
classes of topological algebras closed under certain topologo-algebraic 
operations, as classes defined by some topological analogues of identities, or in some hybrid way (for example, 
as classes of topological algebras which certain topological properties whose underlying abstract algebras 
belong to a certain class of abstract algebras). 

The first approach was developed by Taylor; in what follows, we use his definitions~\cite{Taylor}. 

\begin{definition}
A \emph{variety of topological algebras}, or a \emph{topological variety}, is a class of topological algebras 
of the same signature closed under the formation of topological products, topological quotients, and subalgebras. 
A variety of topological algebras closed under the formation of any continuous homomorphic images is said to be 
\emph{wide}. Varieties of topological algebras definable by identities (i.e., those consisting of \emph{all} 
topological algebras whose underlying abstract algebras belong to a given abstract variety of algebras) are 
called \emph{full varieties} of topological algebras. 
\end{definition}

Note that not every wide variety is full (for example, the variety of precompact topological groups is wide 
but not full). However, for any variety $\mathscr V$ of topological $\Sigma$-algebras, 
$$ 
\overline{\mathscr V}= \{A\in \mathscr U(\Sigma): \text{there exists a topology $\tau$ on $A$ such that 
$(A,\tau)\in \mathscr V$}\} 
$$ 
is a variety of $\Sigma$-algebras~\cite[Corollary~0.5]{Taylor}. This Taylor's observation, together with other 
results of Taylor and Mal'tsev, imply Theorems~A and~B.

\begin{proof}[Proof of Theorems~A and~B]
Whether or not the congruences of a topological algebra are permutable does not depend on the topology of the 
algebra. Thus, a variety $\mathscr V$ of topological algebras is congruence-permutable if and only if so is the 
variety $\overline{\mathscr V}$ of abstract algebras. By Mal'tsev's theorem in \cite[Theorem~4]{Maltsev1954} 
the congruence permutability of $\overline{\mathscr V}$ is equivalent to the existence of a Mal'tsev term in 
the language of $\overline{\mathscr V}$ (that is, of $\mathscr V$), which proves Theorem~A. 

By Taylor's theorem in \cite[Theorem~2.1]{Taylor} if the full variety of all topological algebras with underlying 
abstract algebras in $\overline{\mathscr V}$ (or, equivalently, the variety $\mathscr V$) is congruence-permutable, 
then, given any topological algebra $A\in \overline{\mathscr V}$ (in particular, any $A\in \mathscr V$), any 
open set $U\subset A$, and any congruence $\sim$ of $A$, the $ \sim$-saturation of $U$ is open. This proves 
Theorem~B. 
\end{proof}

Taylor proved in~\cite[Theorem~2.1]{Taylor} that, for full varieties, the converse of Theorem~B is also true; 
namely, if $\mathscr V$ is a full variety, then the saturation with respect to a congruence 
of any open set in any 
topological algebra $A\in \mathscr V$ is open if and only if $\mathscr V$ is congruence-permutable.

In the same paper~\cite{Taylor} Taylor tried to fit his definitions in the framework of the second approach to 
varieties of topological algebras (based on identities). He 
proved that wide varieties are precisely the classes of topological algebras defined by what he called limit 
identities and varieties are classes defined by limit identities and contingent limit 
identities\footnote{Contingent limit identities are a natural analogue of quasi-identities; thus, it 
might be more correct to refer to Taylor's varieties as quasivarieties.}. Later this approach was developed 
further by Protasov~\cite{Protasov}, who proposed the most 
general and natural definition of a variety of topological $\Sigma$-algebras as the class of 
$\Sigma$-algebra $A$ on which all filters in a certain set of filters on the absolutely free $\Sigma$-algebra 
$W(X)$ converge under any interpretation $X\to A$. In other words, he treated filters as topological identities 
and the convergence of filters as the fulfillment of identities. Protasov's varieties are precisely 
classes of algebras closed under the formation of topological products, closed subalgebras, and 
continuous homomorphic images; thus, both Taylor's full and wide varieties, as well as many other classes of 
topological algebras, are varieties in Protasov's sense, but Taylor's varieties are not. 

The third approach is exemplified by Mal'tsev's primitive classes~\cite{Maltsev1957} (consisting of all Hausdorff 
algebras in a given full variety) and Choban's $Q$-quasivarieties\footnote{This is Choban's term (the conventional 
meaning of the term ``quasivariety'' in universal algebra is quite different). In fact, such classes are more like 
prevarieties.}~\cite{Choban1985, Choban1993} (classes of topological algebras with a topological property $Q$, 
e.g., satisfying a certain separation axiom, and closed under the formation of topological products and 
subalgebras). 

In topological varieties  free topological algebras are defined by analogy with the abstract case.

\begin{definition}\label{d2}
Let $\mathscr V$ be a variety of topological algebras, and let $X$ be a topological space. A  
\emph{free topological algebra on $X$ in $\mathscr V$} is a topological algebra $A(X)\in \mathscr V$ together 
with a continuous map $i\colon X\to A(X)$ such that 
\begin{itemize} 
\item 
$A(X)$ is generated by $i(X)$; 
\item 
given any continuous map $f\colon X\to B$, 
where $B\in \mathscr V$, there exists a continuous homomorphism $h\colon A(X)\to B$ for which $f=h\circ 
i$. 
\end{itemize}
\end{definition}

It is well known that, given any topological variety $\mathscr V$ and any space $X$, a free topological algebra 
$A(X)$ on $X$ in $\mathscr V$ exists and is unique in the sense that if $A'(X)$ and $i'\colon X\to A'(X)$ satisfy 
the same conditions as $A(X)$ and $i$, then there exists a topological isomorphism $\varphi\colon A(X)\to A'(X)$ 
for which $i'=\varphi\cdot i$ (see, e.g., \cite{Maltsev1957}).

The notion of a free topological algebra is particularly useful in the case of a congruence-permutable 
topological variety, because, in this case, each topological algebra in the variety is a topological quotient of 
a free algebra.

A standard argument based on the diagonal of a set of maps from $X$ to algebras in $\mathscr V$ shows that such a 
pair always exists and is unique up to topological isomorphism (see \cite{Maltsev1957}). 

\begin{remark}\label{r1}
If $\mathscr V$ contains a non-one-element algebra, then 
\begin{enumerate}
\item[(i)]
$i$ is injective and 
\item[(ii)]
$A(X)$ is freely 
generated by $i(X)$, that is, it is isomorphic to the abstract free algebra $A(X)$ on 
$X$ 
\end{enumerate}
(see~\cite[Proposition~1.2]{Taylor}); this follows from the observation that, given any $A\in \mathscr V$, 
the algebra isomorphic to $A$ but endowed with the indiscrete topology belongs to $\mathscr V$~\cite{Taylor}. 
\end{remark}

\'Swierczkowski proved in~\cite{Swierczkowski}\footnote{Note that by a stronger topology \'Swierczkowski means a 
coarser one.} (see also \cite{Porst} and \cite{Porst-err}) that if $X$ is Tychonoff and the variety $\mathscr V$ 
is full, then, in addition, 
\begin{enumerate}
\item[(iii)]
$i$ is a homeomorphic embedding, so that $X$ can be identified with its image $i(X)$ 
in $A(X)$, 
\item[(iv)] 
the topological algebra $A(X)$ contains $X$ as a closed subspace, and 
\item[(v)] 
$A(X)$ is functionally Hausdorff; moreover, there exists a coarser Tychonoff topology $\tau$ on $A(X)$ such that 
$(A(X),\tau)$ still remains a topological algebra and contains $X$ as a closed subspace. 
\end{enumerate}

\section{Topological Mal'tsev Algebras}
\label{sec3}

We use $\mu$ as a symbol for a ternary operation.  

\begin{definition}
A \emph{Mal'tsev topological algebra} is a topological algebra with  
signature $\{\mu\}$ in which the equations 
$$
x=\mu(x,y,y)=\mu(y,y,x)\eqno(\mathrm M)
$$ 
hold identically. Any ternary operation $\mu$ satisfying the identity $(\mathrm M)$ is called a \emph{Mal'tsev 
operation}. 

By $\mathscr M$ we denote the variety of all Mal'tsev topological algebra.
	
A topological space on which a continuous Mal'tsev operation can be defined (i.e., a space homeomorphic to a 
Mal'tsev topological algebra) is called a \emph{Mal'tsev 
space}. 
\end{definition}

In what follows, by a Mal'tsev algebra we always mean a Mal'tsev topological algebra. 

According to Theorem~A, the underlying space of any topological algebra in any congruence-permutable variety 
of topological algebras is Mal'tsev. Well-known examples of such varieties are the variety of 
topological groups and the more general varieties of topological left loops and  quasigroups.

In a topological group, the Mal'tsev term is $\mu(x,y,z)=x\cdot y^{-1}\cdot z$.

A left loop is an algebra $L$ with two binary operations $*$ and $\setminus $ and one nullary operation 
(constant) $e$ satisfying the identities 
$$
x*(x\setminus y)= y,\qquad x\setminus (x*y)= y, \qquad x*e=x.
$$
A Mal'tsev term on $L$ is $\mu(x,y,z)=x*(y\setminus z)$. Indeed, $\mu(x,x,y) =y$ by the first identity and 
$\mu(y,x,x)=y*e=y$, because $x\setminus x=x\setminus (x*e)=e$ by the second identity.

A quasigroup is an algebra $Q$ with three binary operations $*$, $\slash$, and $\setminus$ 
satisfying the identities 
$$
(y\slash x)*x = y,\qquad (y*x)\slash x=y, \qquad x*(x\setminus y)=y,\qquad x\setminus (x*y)=y.
$$
A Mal'tsev term is $\mu(x,y,z) = (x\slash (y\setminus y))*(y\setminus z)$ (the identity $\mu(x,x,y)=y$ follows 
from $x\slash (x\setminus x)= (x*(x\setminus x))\slash (x\setminus x)=x$). 

\begin{proposition}
\label{p1}
\begin{enumerate}
\item[\rm(1)]
All $T_0$ Mal'tsev algebras are Hausdorff. 
\item[\rm(2)] 
All $T_0$ topological quasigroups are regular.
\item[\rm(3)] 
All $T_0$ topological left loops are regular.
\end{enumerate}
\end{proposition}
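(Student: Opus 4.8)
The plan is to handle the three parts in increasing order of difficulty, using the general theory for (1), a homogeneity-and-cancellation argument for (2), and a Mal'tsev-term trick for (3). For (1) I would argue purely formally: the variety $\mathscr M$ contains the ternary symbol $\mu$ satisfying $(\mathrm M)$ by definition, so by Theorem~A it is congruence-permutable, and Theorem~C then gives that every $T_0$ algebra in $\mathscr M$ is Hausdorff. The same reasoning applies verbatim to quasigroups and left loops, since the Mal'tsev terms exhibited above ($\mu(x,y,z)=(x\slash(y\setminus y))*(y\setminus z)$ and $\mu(x,y,z)=x*(y\setminus z)$) witness that those varieties are congruence-permutable. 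Thus all three classes are Hausdorff once $T_0$, and the real work is to upgrade Hausdorffness to regularity in (2) and (3).

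For (2) I would exploit that a quasigroup has \emph{both} divisions, so every left translation $z\mapsto a*z$ is a homeomorphism (with inverse $z\mapsto a\setminus z$) and the space is homogeneous. Fix $p$ and an open $U\ni p$, and set $\Psi(s,t)=(s\slash t)*p$, a continuous map with $\Psi(p,p)=(p\slash p)*p=p$. Continuity yields an open $V\ni p$ with $\Psi(V\times V)\subseteq U$. Now for $x\in\overline V$ the homeomorphism $z\mapsto(x\slash p)*z$ sends $p$ to $x$, so its image of $V$ is a neighbourhood of $x$ and meets $V$: there are $v,w\in V$ with $(x\slash p)*v=w$. Right cancellation gives $x\slash p=w\slash v$, whence $x=(x\slash p)*p=(w\slash v)*p=\Psi(w,v)\in U$, so $\overline V\subseteq U$. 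Since every loop is a quasigroup, this also disposes of the loop case.

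Part (3) is the crux, because a left loop carries only left division, and the right cancellation used in (2) is simply unavailable: every homeomorphism assembled from left translations writes the unknown point on the \emph{left} of a product, where $\setminus$ cannot isolate it. The device I would use is to place the point into the \emph{third} slot of the Mal'tsev term $\mu(x,y,z)=x*(y\setminus z)$ and invoke its exact inversion $\mu(v,e,\mu(e,v,x))=x$, which follows from the left-loop axioms via $e\setminus(e*s)=s$ and $v*(v\setminus x)=x$. Fix an open $U\ni e$ and, using $\mu(e,e,e)=e$, choose an open $V\ni e$ with $\mu(V\times V\times V)\subseteq U$. For $x\in\overline V$ consider $g(v)=\mu(e,v,x)=e*(v\setminus x)$; since $g(x)=\mu(e,x,x)=e*(x\setminus x)=e$, the preimage $g^{-1}(V)$ is an open neighbourhood of $x$ and hence meets $V$. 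Picking $v\in V$ with $w:=\mu(e,v,x)\in V$, the inversion identity gives $x=\mu(v,e,w)\in\mu(V\times V\times V)\subseteq U$, so $\overline V\subseteq U$. Finally, as the left translations $L_a$ are homeomorphisms with $L_a(e)=a$, regularity at $e$ transfers to every point, completing (3).

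The step I expect to be decisive is exactly this asymmetry in (3): the breakdown of the quasigroup argument forces one to route through $\mu$ rather than through $*$ and $\setminus$ directly. What makes the argument close are two facts peculiar to the genuine left-loop term $\mu$, both failing for general Mal'tsev algebras: the limit identity $g(x)=\mu(e,x,x)=e$, which drives the auxiliary value $w$ into the small neighbourhood $V$, and the exact inversion $\mu(v,e,\mu(e,v,x))=x$, which recovers $x$ as a value of $\mu$ on $V\times V\times V$. Once these are secured the remainder is routine continuity; the main obstacle is recognising that the third-slot substitution is legitimate precisely because this $\mu$ is truly invertible, whereas for an arbitrary continuous Mal'tsev operation it is not (which is why (1) stops at Hausdorff).
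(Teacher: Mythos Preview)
Your argument is correct, but it proceeds quite differently from the paper's. For (1) you and the paper agree. For (2) the paper simply cites two theorems of Mal'tsev: that every quasigroup has a derived algebra whose elementary translations form a transitive group, and that any Hausdorff topological algebra with that property is regular. Your closure argument via $\Psi(s,t)=(s/t)*p$ and right cancellation is in effect a direct, self-contained proof of the special case of those theorems that is needed here. For (3) the paper observes that a topological left loop is rectifiable via $\psi(x,y)=(x,x\setminus y)$ and then quotes Gul'ko's theorem that Hausdorff rectifiable spaces are regular; you instead work explicitly with the Mal'tsev term and the inversion identity $\mu(v,e,\mu(e,v,x))=x$, which is precisely what makes $\psi$ a homeomorphism. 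So your approach and the paper's are morally the same mechanism viewed from two sides: the paper packages the cancellation as ``rectifiability'' and outsources the closure argument to Gul'ko, while you unpack it by hand. What you gain is a self-contained proof with the exact axioms on display; what the paper gains is brevity and a link to the literature on rectifiable spaces. One small remark: your aside that ``every loop is a quasigroup'' at the end of (2) does not touch (3), since left loops need not be loops---but you clearly recognise this, as your separate treatment of (3) shows.
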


\begin{proof}
Assertion (1) is an immediate corollary of Theorem~C. 

(2)\enspace According to Theorem~2 of \cite{Maltsev1954} (see also the lemma preceding this theorem), for any 
quasigroup, there exists a derived algebra whose elementary translations form a transitive group, and by 
Theorem~7 of \cite{Maltsev1954}, any Hausdorff topological algebra with this property is regular. It remains to 
apply assertion (1) and the above observation that the underlying space of any topological quasigroup is 
Mal'tsev. 

(3)\enspace Recall that a topological space $X$ is said to be \emph{rectifiable} if there exists a homeomorphism 
$\psi\colon X^2\to X^2$ and a point $e\in X$ such that $\pi_1(\psi(x,y))=x$ and $\psi(x,x)=(x,e)$ for all $x\in 
X$ (here $\pi_1$ denotes the projection onto the first coordinate). It is easy to see that any topological left 
loop is rectifiable: it suffices to set $\psi(x,y)=(x, x\setminus y)$. The converse is also true (see, e.g., 
\cite{Usp}). According to Corollary~2.2 of \cite{Gulko}, any Hausdorff rectifiable space is regular. 
\end{proof}

Hausdorff Mal'tsev spaces are not necessarily regular. 

\begin{example}
Let $X$ be the set $\mathbb Q$ endowed with the topology whose base consists of all 
open intervals and all sets of the form $(a,b)\setminus \{\frac 1n:n\in \mathbb N\}$. Thus, the topology of $X$ 
is finer than the standard zero-dimensional topology of $\mathbb Q$ (induced from $\mathbb R$). 
According to Theorem~2 of \cite{3}, $X$ is a Mal'tsev space, and it is  Hausdorff but not regular. 
\end{example}

\section{Free Topological Mal'tsev Algebras}
\label{sec4}

We refer to the free topological algebra in 
$\mathscr M$ on a topological space $X$ as the \emph{free topological Mal'tsev algebra}, or simply the 
\emph{free Mal'tsev algebra}, on $X$ and denote it by $M(X)$. 
By definition, $M(X)$ is a Mal'tsev algebra for which there exists a continuous map $i_X\colon X\to M(X)$ 
with the following properties: 
\begin{itemize}
\item
$M(X)$ is generated by $i_X(X)$; 
\item 
given any continuous map $f\colon X\to M$, 
where $M\in \mathscr M$, there exists a continuous homomorphism $h\colon M(X)\to M$ such that 
$f=h\circ i_X$. In particular, any continuous map $i_X(X)\to M\in \mathscr M$ extends to a continuous homomorphism 
$M(X)\to M$. 
\end{itemize} 

\begin{theorem}
\label{t1}
\begin{enumerate}
\item[\rm(1)]
The free Mal'tsev algebra $M(X)$ exists for every topological space~$X$, and it is unique up to topological 
isomorphism. 
\item[\rm(2)] 
The map  $i_X\colon X\to M(X)$ is injective, and $M(X)$ is freely generated by~$i_X(X)$. 
\item[\rm(3)] 
Every Mal'tsev algebra $M$ is the image of $M(M)$ under an open continuous homomorphism which is 
simultaneously a retraction. Therefore, every Mal'tsev algebra is a topological quotient of a free Mal'tsev 
algebra. 
\item[\rm(4)] 
For any continuous map $f\colon X\to Y$, there exists a unique continuous homomorphism $h\colon 
M(X)\to M(Y)$ for which $i_Y\circ f = h\circ i_X$, and  if $i_Y\circ f\circ i_X^{-1}\colon 
i_X(X)\to i_Y(Y)$ is quotient, then $h$ is open. 
\item[\rm(5)] 
If $M(X)$ is $T_0$ (and hence Hausdorff), then $i(X)$ is closed in $M(X)$.
\item[\rm(6)] 
If $X$ is Tychonoff, then $X$ is embedded in $ M(X)$ as a closed subspace, and if $X$ is functionally Hausdorff, 
then so is $M(X)$.
\end{enumerate} 
\end{theorem}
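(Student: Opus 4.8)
The plan is to obtain most of the assertions by specializing the general theory recalled in Sections~\ref{sec1}--\ref{sec2} to the concrete variety $\mathscr M$, whose defining feature is that $\mu$ is itself a Mal'tsev term, so that $\mathscr M$ is congruence-permutable by Theorem~A and the Corollary to Theorem~B applies. For \textup{(1)}, I would simply note that $\mathscr M$ is a (full) variety of topological algebras and invoke the general existence-and-uniqueness statement for free topological algebras quoted after Definition~\ref{d2}; it only remains to record that $\mathscr M$ contains a non-one-element algebra, e.g.\ $\mathbb R$ with $\mu(x,y,z)=x-y+z$. This last observation is exactly the hypothesis of Remark~\ref{r1}, so its items (i) and (ii) give \textup{(2)}: $i_X$ is injective and $M(X)$ is freely generated by $i_X(X)$, hence isomorphic to the abstract free Mal'tsev algebra on $X$.

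For \textup{(3)} I would feed the identity map $\mathrm{id}_M\colon M\to M$ into the universal property of $M(M)$, obtaining a continuous homomorphism $h\colon M(M)\to M$ with $h\circ i_M=\mathrm{id}_M$; thus $i_M$ is a continuous section of $h$, so $h$ is onto, $i_M$ is an embedding, and $i_M\circ h$ is a retraction of $M(M)$ onto $i_M(M)\cong M$. To see that $h$ is open, set $\theta=\ker h$ and factor $h=\bar h\circ\pi$, where $\pi\colon M(M)\to M(M)/\theta$ is the canonical projection; by the Corollary to Theorem~B this $\pi$ is continuous and open, and $\bar h$ is a continuous bijective homomorphism. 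Its inverse is $\pi\circ i_M$, which is continuous, so $\bar h$ is a homeomorphism and $h=\bar h\circ\pi$ is open. In particular $M\cong M(M)/\theta$ topologically, which is the final sentence of~\textup{(3)}.

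Assertion \textup{(4)} splits into functoriality and openness. Applying the universal property of $M(X)$ to the continuous map $i_Y\circ f\colon X\to M(Y)$ yields a \emph{unique} continuous homomorphism $h$ with $h\circ i_X=i_Y\circ f$, which is the functoriality part. For openness, assume $g:=i_Y\circ f\circ i_X^{-1}\colon i_X(X)\to i_Y(Y)$ is quotient; then $g$ is onto and, since $i_Y$ is injective by \textup{(2)}, $f$ is onto, whence $h$ is onto. As in \textup{(3)} I would factor $h=\bar h\circ\pi$ through the open projection $\pi\colon M(X)\to M(X)/\theta$, $\theta=\ker h$, and show that the continuous bijective homomorphism $\bar h$ is a homeomorphism. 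The map $\pi\circ i_X$, regarded on $i_X(X)$, is constant on the fibres of $g$ (if $g$ identifies $i_X(x),i_X(x')$ then $h$ identifies them, so $\pi$ does), so by the quotient property of $g$ it factors as a continuous map $\gamma\colon i_Y(Y)\to M(X)/\theta$; composing with $i_Y$ and invoking the universal property of $M(Y)$ produces a continuous homomorphism $\psi\colon M(Y)\to M(X)/\theta$ which, because $f$ is onto, inverts $\bar h$ on the generators and hence everywhere. Thus $\bar h$ is a homeomorphism and $h$ is open.

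Finally, \textup{(6)} is essentially a citation of \'Swierczkowski: since $\mathscr M$ is full, properties (iii) and (iv) recalled in Section~\ref{sec2} give, for Tychonoff $X$, that $i_X$ is a closed homeomorphic embedding; and if $X$ is only functionally Hausdorff I would pass to the coarser Tychonoff topology $X'$ on the same underlying set, use that the induced continuous homomorphism $M(X)\to M(X')$ is a bijection (both algebras are freely generated by bijective generating sets, by \textup{(2)}), and pull back along it the real-valued functions separating points of the functionally Hausdorff $M(X')$ from property (v). The part I expect to be the genuine obstacle is \textup{(5)}: here only $T_0$-ness is assumed, which upgrades to Hausdorff by Proposition~\ref{p1}\,(1), but \emph{not} to regularity or the Tychonoff property (the Example shows Hausdorff Mal'tsev spaces need not be regular), so \'Swierczkowski's machinery is unavailable and a direct argument is required. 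My plan would be to exhibit $i_X(X)$ as the fixed-point set of a continuous retraction of $M(X)$, or equivalently as an equalizer of two continuous maps into a Hausdorff space, and then conclude closedness from Hausdorffness; constructing such a retraction from the Mal'tsev structure (or, failing that, setting up a length/normal-form filtration of $M(X)$ with $i_X(X)$ as its closed bottom layer and using the openness of saturations from Theorem~B) is the step I anticipate to be delicate.
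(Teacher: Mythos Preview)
Your treatment of \textup{(1)}--\textup{(4)} and \textup{(6)} matches the paper's. The existence/uniqueness and free-generation statements are handled by the same citations (Mal'tsev for \textup{(1)}, Taylor's Proposition~1.2 via Remark~\ref{r1} for \textup{(2)}, \'Swierczkowski for \textup{(6)}). For \textup{(3)} and \textup{(4)} the paper also factors the surjective homomorphism through the topological quotient $M(X)/\ker h$ and identifies the induced bijection with a homeomorphism; your use of the Corollary to Theorem~B where the paper invokes Theorem~B directly is a cosmetic difference only.

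The genuine gap is \textup{(5)}. Your two proposed strategies are not viable as stated. Exhibiting $i_X(X)$ as the fixed-point set of a retraction of $M(X)$ would require a continuous retraction $M(X)\to i_X(X)$, but composing such a retraction with $\mu$ would endow $i_X(X)$ (hence $X$) with a continuous Mal'tsev operation, which cannot hold for arbitrary $X$; the equalizer variant has the same obstruction, since there is no evident pair of continuous maps out of $M(X)$ whose coincidence set is exactly $i_X(X)$. The filtration idea is what the paper uses for Proposition~\ref{p-a}, but that argument relies on a compactification of $X$ and hence on the Tychonoff property, which you correctly note is unavailable here.

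The paper's device is to pass one level up: set $M=M(X)$ and work inside $M(M)$. By \textup{(3)}, $i_M(M)$ is a retract of the Hausdorff space $M(M)$ and hence closed. Let $M_X$ be the subalgebra of $M(M)$ generated by $i_M(i_X(X))$; an elementary separating-map argument into the two-element group shows $M_X\cap i_M(M)=i_M(i_X(X))$, so $i_M(i_X(X))$ is closed in $M_X$. Finally, $M_X$ is algebraically free on $i_M(i_X(X))$, and comparing it with $M(X)$ via the universal property yields a continuous isomorphism witnessing that $M_X$ carries a topology comparable to that of $M(X)$; closedness of $i_X(X)$ in $M(X)$ follows. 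The key idea you are missing is this ``double-free'' construction $M(M(X))$, which manufactures a Hausdorff ambient space in which the retract argument becomes available without assuming anything about $X$ beyond $M(X)$ being Hausdorff.
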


\begin{proof}
Assertion (1) was proved by Mal'tsev in~\cite{Maltsev1957} for free topological algebras in any full varieties, 
and (2) follows from Proposition~1.2 of \cite{Taylor}. 

Let us prove (3). Take any Mal'tsev algebra $M$.  By definition, there exists a continuous homomorphism $h\colon 
M(M)\to M$ such that the identity homeomorphism $f\colon M\to M$ equals $h\circ i_M$. Therefore, $i_M\colon M\to 
M(M)$ is a homeomorphic embedding and $h$ is a retraction. Hence $h$ is quotient (it suffices to note 
that, for any $X\subset M$, we have $h^{-1}(X)\cap i_M(M)=i_M(X)$). Since the variety $\mathscr M$ is 
congruence-permutable and $h$ is continuous, it follows by Theorem~B that, given any open set $U\subset M(M)$, 
the set $h^{-1}(h(U))$ is open in $M(M)$ and hence $h(U)$ is open in $M$. Therefore, $h$ is an open 
continuous homomorphism and $M$ is the topological quotient of $M(M)$ by its kernel. 

Let us prove (4). Given a continuous map $f\colon X\to Y$, consider a  
continuous homomorphism $h\colon M(X)\to M(Y)$ for which $i_Y\circ f=h\circ i_X$. It is unique, because $M(X)$ is 
generated by $i_X(X)$ and, for any term $t$ in the language of $\mathscr M$ and any $x_1, \dots, x_n\in X$ (where 
$n$ is the number of variables of $t$), we have $h(t(x_1,\dots, x_n))=t(h(x_1),\dots, h(x_n))$. 

Now suppose that $i_Y\circ f\circ i_X^{-1}$ is quotient (and hence surjective). Then $h$ is surjective, because  
$M(Y)$ is generated by $i_Y(Y)$. Let $\tilde M(X)$ be the topological quotient of $M(X)$ by $\ker h$ (this is 
a Mal'tsev topological algebra by the corollary of Theorem~B), and let $\tilde h$ be the corresponding canonical 
projection. Then the identity map $\tilde i\colon \tilde M(X)\to M(Y)$ is a continuous isomorphism and $h = 
\tilde i\circ \tilde h$. The map $i_Y\circ f\circ i_X^{-1}$ coincides with the restriction of $h$ to $i_X(X)$. As 
a map of sets without topology, it also coincides with the restriction of $\tilde h$ to $i_X(X)$ (provided that 
we identify $\tilde M(X)$ and $M(Y)$ as sets). Since the map $i_Y\circ f\circ i_X^{-1}$ is quotient, it follows 
that the topology on its image is finest among those with respect to which it is continuous. Therefore, $\tilde 
i^{-1}|_{i_Y(Y)}\circ i_Y\colon Y\to \tilde M(X)$ is a continuous injection. The continuous homomorphism $g\colon 
M(Y)\to \tilde M(X)$ for which $\tilde i^{-1}|_{i_Y(Y)}\circ i_Y = g\circ i_Y$ must coincide with $\tilde i^{-1}$ 
as a map of sets,  because $M(Y)$ is freely generated by $i_Y(Y)$ and $i_Y$ is injective. Thus, the identity 
isomorphism $i$ is a homeomorphism and the homomorphism $h$ is quotient. By Theorem~B it is also open. 

We proceed to (5). Suppose that $M(X)$ is Hausdorff. Let us denote $M(X)$ by $M$ and consider the free Mal'tsev 
algebra $M(M)$. It follows from (3) that $M$ can be treated as a closed subspace of $M(M)$ (because any retract of 
a Hausdorff space is closed). Let $M_X$ be the subalgebra of $M(M)$ generated by $i_X(X)\subset M$. Clearly, this 
is algebraically the free algebra on $i_X(X)$ (because we can extend any map $f$ from $i_X(X)$ to a 
$\{\mu\}$-algebra $A$ first to some map $M\to A$ and then to a 
homomorphism $M(M)\to A$, whose restriction to $i_X(X)$ is a homomorphism $M_X\to A$ extending $f$). 
Note that $M_X\cap M=i_X(X)$. Indeed, suppose that $y_0\in M\setminus i_X(X)$ and consider 
the map (not necessarily continuous) $g\colon M\to \{0,1\}$ defined by setting $g(y)= 1$ if $y=y_0$ and $g(y)= 0$ 
otherwise. The set $\{0,1\}$ carries a group structure and hence a Mal'tsev operation. Therefore, the map $g$ 
extends to a homomorphism of the abstract free $\{\mu\}$-algebra on $M$ (which coincides with $M(M)$ as a set) to 
$\{0,1\}$. The preimage  of $0$ under this homomorphism contains the subalgebra of $M(M)$ generated by $i(X)$ and 
does not contain $y_0$. 

Thus, $M_X$ and $M(X)$ coincide as abstract algebras and $M_X$ with the topology induced on $M_X$ from $M(M)$ 
is a Mal'tsev space containing $i_X(X)$ as a closed subspace. The identity homeomorphism $i_X(X)\to i_X(X)$ 
extends to a continuous homomorphism $M_X\to M(X)$, which is an isomorphism, because both algebras $M_X$ and 
$M(X)$ are freely generated by $i_X(X)$. This means that there exists a topology on $M(X)$ which is coarser than 
the topology of $M(X)$ and with respect to which $i_X(X)$ is closed. Therefore, $i_X(X)$ is closed in $M(X)$. 

Both assertions of (6) follow from \'Swierczkowski's theorem~\cite{Swierczkowski}. 
\end{proof}

According to \'Swierczkowski's theorem~\cite{Swierczkowski}, any Tychonoff space $X$ is embedded in $M(X)$ as a 
closed subspace. However, this is not always the case, at least because there exists a topological space 
that cannot be embedded in a congruence-permutable topological algebra as a 
subspace~\cite[Corollary~3.6]{Coleman}. The following 
question naturally arises.

\begin{problem*}
What topological spaces $X$ are embedded in $M(X)$ as (closed) subspaces?
\end{problem*}

Together with the variety $\mathscr M$ of all Mal'tsev algebras, it makes sense to consider the class 
of all Tychonoff Mal'tsev algebras. Let us denote it by $\mathscr M^{\tych}$. We define the \emph{free Tychonoff 
Mal'tsev algebra} $M^\tych(X)$ on a Tychonoff space~$X$ as a Tychonoff Mal'tsev algebra for 
which there exists a continuous map $i_X\colon X\to M^\tych(X)$  with the following properties: 
\begin{itemize} 
\item $M^\tych(X)$ is generated by $i_X(X)$; 
\item given any continuous map $f\colon X\to M$, where $M\in 
\mathscr M^\tych$, there exists a continuous homomorphism $h\colon M^\tych(X)\to M$ such that $f=h\circ i_X$. 
\end{itemize} 

\begin{theorem} 
\label{t2}
Let $X$ be any Tychonoff space.
\begin{enumerate}
\item[\rm(1)]
The free Mal'tsev space $M^\tych(X)$ exists and is unique up to topological 
isomorphism. 
\item[\rm(2)]
The map $i_X$ is a topological embedding, so that $X$ can be identified with the subspace $i_X(X)$ of  
$M^\tych(X)$. Moreover, this subspace is closed.
\item[\rm(3)]
The Mal'tsev space $M^\tych(X)$ is freely generated by $i_X(X)=X$. 
\item[\rm(4)]
Every $M\in \mathscr M^\tych$ is the image of 
$M^\tych(M)$ under an open continuous homomorphism which is simultaneously a retraction. Therefore, every 
Tychonoff Mal'tsev space is a topological quotient of a free Tychonoff Mal'tsev space. 
\item[\rm(5)]
Any quotient map $X\to M$, where $M\in \mathscr M^\tych$, extends to a quotient homomorphism $M^\tych(X)\to M$. 
\end{enumerate} 
\end{theorem}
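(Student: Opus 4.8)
The plan is to follow the pattern of the proof of Theorem~\ref{t1}, using the full variety $\mathscr M$ and the free Mal'tsev algebra $M(X)$ as auxiliary objects, and to lean on \'Swierczkowski's theorem at the one point where the class $\mathscr M^\tych$ (which is closed under topological products and subalgebras but \emph{not} under topological quotients) behaves differently from a full variety. For (1) I would first record that $\mathscr M^\tych$ is closed under topological products (a product of Tychonoff spaces is Tychonoff and the coordinatewise operation is Mal'tsev) and under subalgebras (any subspace of a Tychonoff space is Tychonoff), and that it contains non-one-element algebras such as $\mathbb R$. Existence then follows from the standard diagonal construction: fix a cardinal bound, let $\delta\colon X\to\prod_f M_f$ be the diagonal of a representative set of continuous maps $f\colon X\to M_f$ into generated Tychonoff Mal'tsev algebras, and take $M^\tych(X)$ to be the subalgebra generated by $\delta(X)$ with the induced topology; it lies in $\mathscr M^\tych$ and has the required universal property, and uniqueness up to topological isomorphism is the usual universal-property argument.

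The substance is in (2) and (3), and the main obstacle is (3): proving that $M^\tych(X)$ is \emph{abstractly} free on $X$ amounts to showing that continuous homomorphisms from the free abstract Mal'tsev algebra $F(X)$ into Tychonoff Mal'tsev algebras separate points. This is delicate, since the free Mal'tsev algebra is not a heap, so groups alone do not suffice, while for connected $X$ discrete targets are useless. I would resolve it using \'Swierczkowski's theorem (item~(v) stated before Section~\ref{sec3}): applied to the full variety $\mathscr M$ and the Tychonoff space $X$, it yields a Tychonoff topology $\tau$ on the underlying abstract algebra $F(X)$ of $M(X)$, coarser than that of $M(X)$, for which $(F(X),\tau)$ is a topological Mal'tsev algebra containing $i_X(X)$ as a closed subspace. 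Then $(F(X),\tau)\in\mathscr M^\tych$, the map $i_X\colon X\to(F(X),\tau)$ is continuous, so by the universal property there is a continuous homomorphism $p\colon M^\tych(X)\to(F(X),\tau)$ extending it. The composite of the canonical surjection $F(X)\to M^\tych(X)$ with $p$ is a homomorphism fixing the generators, hence the identity of $F(X)$; therefore that surjection is injective, which is exactly the abstract freeness asserted in (3) (and yields injectivity of $i_X$ as well).

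The embedding statement in (2) I would obtain directly: for $x_0\in X$ and a neighbourhood $V$, complete regularity gives a continuous $\phi\colon X\to\mathbb R$ with $\phi(x_0)=0$ and $\phi\equiv 1$ off $V$; since $\mathbb R\in\mathscr M^\tych$, the map $\phi$ extends to a continuous homomorphism $M^\tych(X)\to\mathbb R$, and these extensions recover the topology of $X$ on $i_X(X)$, so $i_X$ is a topological embedding. Closedness then follows at once from the previous step: by abstract freeness $p$ is the identity on the underlying set, hence a continuous bijection on points, and since $i_X(X)$ is closed in $(F(X),\tau)$ the set $i_X(X)=p^{-1}(i_X(X))$ is closed in $M^\tych(X)$.

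Finally, (4) and (5) I would prove exactly as the corresponding parts of Theorem~\ref{t1}, using that $M^\tych(X),M^\tych(M)\in\mathscr M^\tych\subseteq\mathscr M$ and that $\mathscr M$ is congruence-permutable. For (4), the identity of $M\in\mathscr M^\tych$ extends to a continuous homomorphism $h\colon M^\tych(M)\to M$ with $h\circ i_M=\mathrm{id}_M$; thus $i_M$ is a section that is a homeomorphic embedding by (2), so $h$ is a retraction and hence a quotient map, and $h$ is open because Theorem~B makes the saturation $h^{-1}(h(U))$ of every open $U$ open. For (5), a quotient map $q\colon X\to M$ extends to a surjective continuous homomorphism $h\colon M^\tych(X)\to M$; passing to the topological quotient $M^\tych(X)/\ker h$ (defined by the corollary of Theorem~B) and writing $h=\tilde\imath\circ\tilde h$ with $\tilde h$ the projection and $\tilde\imath$ the induced continuous bijection, the fact that $q=\tilde\imath\circ(\tilde h\circ i_X)$ is quotient forces $\tilde\imath$ to be a homeomorphism, so $h$ is quotient, and open again by Theorem~B. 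I expect the only genuinely nonroutine point to be the invocation of \'Swierczkowski's theorem in (3); everything else is mechanical once $M^\tych(X)$ is known to be abstractly free.
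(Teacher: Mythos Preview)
Your proposal is correct and follows essentially the same route as the paper: the diagonal construction for (1), \'Swierczkowski's Tychonoff topology on the abstract free algebra for (3) and for the closedness in (2), and verbatim repeats of the Theorem~\ref{t1} arguments for (4) and (5). The only organisational difference is that you prove the embedding part of (2) separately via maps into $\mathbb R$, whereas the paper gets it in the same stroke as (3) (the \'Swierczkowski homomorphism $p$ already factors the closed embedding $X\hookrightarrow(F(X),\tau)$ through $i_X$, forcing $i_X$ to be an embedding); your direct argument is fine but redundant once \'Swierczkowski is invoked anyway.
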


\begin{proof}
Assertion (1) is proved by a well-known standard argument based on the diagonal theorem, which applies to any 
multiplicative hereditary topological property rather to only being Tychonoff. 

Note that, in the definition of the free Mal'tsev space $M^\tych(X)$, it suffices to require that 
homomorphisms $h$ with the property $f=h\circ i_X$ exist only for continuous maps $f$ from $X$ to Tychonoff 
Mal'tsev algebras $M$ generated by $f(X)$. Any such $M$ has cardinality at most $|X|\cdot \omega$ and, therefore, weight 
at most $2^{|X|\cdot \omega}$. Clearly, it is also sufficient to consider the maps $f\colon X\to M$ up to composition 
with topological isomorphisms of $M$; thus, we can restrict ourselves to Mal'tsev algebras whose underlying spaces 
are contained in the Tychonoff cube $[0,1]^{2^{|X|\cdot \omega}}$ as subspaces. Clearly, all such Mal'tsev 
algebras, as well as continuous maps $f$ from $X$ to them, form a set. Let us index these maps by the elements of 
some set $I$ as $f_\iota\colon X\to M_\iota$, $\iota\in I$ (spaces $M_\iota$ with different indices may coincide). 
We set $i_X=\mathop{\Delta}\limits_{\iota\in I}f_\iota\colon X\to \prod_{\iota\in I}M_\iota$ 
and define $M^\tych(X)$ to be the Mal'tsev subalgebra of 
$\prod_{\iota\in I}M_\iota$ generated by $i_X(X)$. It is easy to see that $i_X$ and $M^\tych(X)$ have the required 
properties: any continuous map $f$ from $X$ to a Tychonoff Mal'tsev algebra $M$ coincides with $f_\iota$ for 
some $\iota\in I$ (up to a topological isomorphism $\varphi$ between $M_\iota$ and the subalgebra of $M$ generated 
by $f(X)$), and the required homomorphism $h$ is the composition of the restriction of the canonical projection 
$\pi_\iota$ to $i_X(X)$ and $\varphi$. Uniqueness follows from that if $M_1(X)$ and $M_2(X)$ are two free 
Tychonoff Mal'tsev algebras on $X$ and $i_j\colon X\to M_j(X)$ are the corresponding maps $i_X$, then the 
homomorphisms $h_1\colon M_1(X)\to M_2(X)$ and $h_2\colon M_2(X)\to M_1(X)$ for which $i_2=h_1\circ i_1$ and 
$i_1=h_2\circ i_2$ must be mutually inverse and hence they are topological isomorphisms (see also the proof of 
Theorem~1 in \cite{Maltsev1957}). 

Let us prove (2). According to \cite{Swierczkowski}, there exists a Tychonoff topology on the abstract free 
$\{\mu\}$-algebra $M^a(X)$ on the set $X$ such that the space $X$ is embedded in 
$(M^a(X), \tau)$ as a closed subspace. By definition, for the identity homeomorphism $f\colon X\to X$, there 
exists a continuous homomorphism $h\colon (M^a(X), \tau) \to M^\tych(X)$ such that $f= h\circ i_X(X)$. 
This means that $i_X$ is a homeomorphic embedding. Clearly, $h$ is an isomorphism. Therefore, $M^\tych(X)$ 
is $M^a(X)$ with a topology finer than $\tau$. Hence $X=i_X(X)$ is closed in $M^\tych(X)$. This 
also proves~(3). 

The proof of assertion (4) repeats the proof of assertion (3) of Theorem~\ref{t1}. 

Assertion (5) is proved along the same lines as assertion (4) of Theorem~\ref{t1}. Any quotient map $f\colon 
X\to M$ is surjective; therefore, so is its homomorphic extension $h\colon M^{\tych}(X)\to M$. Let $\tilde M(X)$ 
be the topological quotient of $M(X)$ by $\ker h$, and let $\tilde h$ be the corresponding canonical projection. 
Then the identity map $\tilde i\colon \tilde M(X)\to M$ is a continuous isomorphism and $h = \tilde i\circ \tilde 
h$. The map $f$ coincides with the restriction of $h$ to $X$. As a map of sets without topology, it also 
coincides with the restriction of $\tilde h$ to $X$ (provided that $\tilde M(X)$ and $M$ are identified as sets). 
Since the map $f$ is quotient, it follows that the topology on its image $M$ is finest among 
those with respect to which it is continuous. But $\tilde h|_X$ is continuous with respect to the topology 
of $\tilde M(X)$; therefore, the topology of $M$ is finer than that of $\tilde M(X)$, so that $\tilde i^{-1}$ is 
continuous and the identity isomorphism $\tilde i$ is topological. Hence the homomorphism $h = \tilde 
i\circ \tilde h$ is quotient. By Theorem~B it is also open. 
\end{proof}

Recall that the free topological group $F(X)$ of a Tychonoff\footnote{We assume $X$ to be Tychonoff only for 
simplicity; the considerations remain valid in the general case.} space $X$ (the free 
topological algebra on $X$ in the variety of all topological groups) can be represented as the set of reduced 
words $x_1^{\varepsilon _1}\dots x_n^{\varepsilon _n}$, where $n\in \mathbb N_0$ (if $n=0$, then the word is 
empty), $\varepsilon_i=\pm1$ and $x_i\in X$ for $i\le n$. It is assumed that a homeomorphism ${}^{-1}\colon X\to 
X^{-1}$ between $X$ and its disjoint homeomorphic copy is fixed, and for each $x\in X$, $x^{-1}$ denotes the 
image of $x$ under this homeomorphism. A word is reduced if it does not contain pairs of neighboring letters of 
the forms $xx^{-1}$ and $x^{-1}x$. Multiplication in this group is concatenation followed by reduction, that is, 
deleting all prohibited pairs $xx^{-1}$ and $x^{-1}x$. For example, $xyz\cdot z^{-1}y^{-1}x=xx$. The identity 
element is the empty word. 

The free topological group of any 
Tychonoff space exists and is Tychonoff.

Setting 
$$
F_k(X)=\{x_1^{\varepsilon _1}\dots x_n^{\varepsilon _n}: n\le k, x_i\in X, \varepsilon_i=\pm 1\},
$$ 
we obtain the decomposition 
$$
F(X)=\bigcup_{k\ge 0}F_k(X).
$$
Each $F_k(X)$ is the image of the space $(X\oplus \{e\} \oplus X^{-1})^k$ under the natural 
multiplication map $m_k$ defined by 
$$
m_k(x_1^{\varepsilon _1},\dots, x_n^{\varepsilon _n})= x_1^{\varepsilon _1}\cdot\dots \cdot x_n^{\varepsilon _n}
$$  
(here $\{e\}$ is a singleton disjoint from $X$ and $X^{-1}$; $e$ represents the identity element of $F(X)$) 
and $F(X)$ is the image of the topological sum $W_F(X)=\bigoplus_{k\ge 0}(X\oplus \{e\} \oplus X^{-1})^k$ under 
the multiplication map $m$ defined as $m_k$ on the corresponding summand. The topological sum is the absolutely 
free $\{e,  {}^{-1}, \cdot\}$-algebra and $m$ is the canonical projection $W(X)\to W(X)/{\sim}= F(X)$, where 
$\sim$ is the smallest congruence determined by the standard group identities (associativity and so on). Of 
course it is very important to know when the map $m$ is quotient, i.e., when $F(X)$ is the topological quotient 
of $W_F(X)$. This is the case if and only if all $m_k$ are quotient and $F(X)$  is the direct (=~inductive)  
limit of its subspaces $F_k(X)$, that is, a set $U\subset F(X)$ is open in $F(X)$ if and only if each 
intersection $U\cap F_k(X)$ is open in $F_k(X)$. The problem of describing all spaces $X$ for which $m$ is 
quotient, as well as the problems of describing $X$ for which $F(X)$ has the direct limit topology and $X$ for 
which the maps $m_k$ are quotient, is very difficult. So far, only a few sufficient conditions have been 
obtained. For example, it is known that if $X$ is the direct limit of a countable sequence of its compact 
subspaces (such spaces are called \emph{$k_\omega$-spaces}), then $m$ is quotient (see~\cite{MMO}). 

For free Mal'tsev spaces, there is a similar decomposition. First, note that the absolutely 
free $\{\mu\}$-algebra $W(X)$ on a topological space $X$ can be constructed by induction as follows: 
\begin{gather*}
W_0(X)=X,\\
W_1(X)= W_0(X)\times W_0(X)\times W_0(X),\\ 
\dots \\
W_n(X) =\hskip-6pt \bigoplus_{\substack{i,j,k\ge 0\\\hskip-9pt\max\{i,j,k\}=n-1\hskip-9pt}}\hskip -6pt 
W_i(X)\times W_j(X)\times W_k(X),\\
\dots\,.
\end{gather*}
Obviously, $W_i(X)\cap W_j(X)=\varnothing$ for $i\ne j$. We set 
$$
W(X)=\bigoplus_{i\in \mathbb N_0} W_i(X).
$$

The operation $\mu$ on $W(X)$ is defined by 
$$
\mu(x,y,x)=(x,y,z)\in W_i(X)\times W_j(X)\times W_k(X)\subset W_{\max\{i,j,k\}+1}(X).
$$
It is easy to see that the map $\mu\colon W(X)^3\to W(X)$ is continuous. 

Consider the relation $R$ on $W(X)$ defined by the rule: $(x,y)\in R$ if there exists an $z\in W(X)$ for which 
$x=(z,z,y)$ or $x=(y,z,z)$. Let $\sim$ be the smallest congruence containing $R$ (that is, the intersection of 
all such congruences). Algebraically, $M(X)$ is the quotient of $W(X)$ by $\sim$. Let us denote the canonical 
projection $W(X)\to W(X)/{\sim}=M(X)$ by $j$.

For $k\in \mathbb N_0$, we set 
$$
M_k(X)=j(W_k(X))\quad \text{and} \quad j_{k}=j|_{W_k}\colon W_k\to M_k.
$$
Note that, for each $w=(x,y,z)\in W_i(X)$ and any $x_0\in X$, we have $u=(w,x_0,x_0)\in W_{i+1}(X)$ and 
$j(w)=j(u)$. Therefore,  $M_i(X)\subset M_k(X)$ for $i\le k$ and $M_k$ is the set of all polynomials in at 
most $3^i$ variables in $M(X)$. In particular, $X=M_0(X)\subset M_1(X)$. 

\begin{proposition}
\label{p2}
A topological space $X$ is Mal'tsev if and only if $X$ is a retract of $M_1(X)$. 
\end{proposition}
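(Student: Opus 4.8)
The statement is a biconditional whose two halves call for opposite tools: the universal property of $M(X)$ for one implication, and an explicit construction of a Mal'tsev operation for the other. Throughout I identify $X$ with $M_0(X)=i_X(X)$ and recall that, by the construction $M_1(X)=j(W_1(X))$ with $W_1(X)=X^3$, every element of $M_1(X)$ has the form $\mu(x,y,z)$ with $x,y,z\in X$, where $\mu$ denotes the continuous operation of the topological algebra $M(X)$. Since $M_0(X)\subseteq M_1(X)$, the phrase ``$X$ is a retract of $M_1(X)$'' refers to $i_X(X)$ sitting inside $M_1(X)$.

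For the direction asserting that a Mal'tsev space $X$ is a retract of $M_1(X)$, the plan is to fix a continuous Mal'tsev operation $\mu_X$ on $X$, so that $(X,\mu_X)\in\mathscr M$, and to apply the universal property to the identity map $X\to(X,\mu_X)$. This produces a continuous homomorphism $h\colon M(X)\to(X,\mu_X)$ with $h\circ i_X=\mathrm{id}_X$. The relation $h\circ i_X=\mathrm{id}_X$ immediately shows that $i_X$ is a homeomorphic embedding, its inverse on $i_X(X)$ being the restriction of $h$; this legitimizes the identification above. I then restrict $h$ to $M_1(X)$: because $h$ is a homomorphism, $h(\mu(x,y,z))=\mu_X(x,y,z)\in X$, so $h(M_1(X))\subseteq X$, and because $h$ fixes $X$ pointwise, $h|_{M_1(X)}\colon M_1(X)\to X$ is the desired retraction.

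For the converse I would transport $\mu$ back to $X$ through a given retraction $r\colon M_1(X)\to X$. The map $p\colon X^3\to M_1(X)$ defined by $p(x,y,z)=\mu(x,y,z)$ is continuous, being the restriction of the continuous operation of $M(X)$ and landing in $M_1(X)$ by definition; hence $\mu_X:=r\circ p\colon X^3\to X$ is continuous. The Mal'tsev identities then transfer verbatim: from $\mu(x,y,y)=\mu(y,y,x)=x$ in $M(X)$ together with $r|_X=\mathrm{id}_X$ one reads off $\mu_X(x,y,y)=r(x)=x$ and $\mu_X(y,y,x)=r(x)=x$. Thus $\mu_X$ is a continuous Mal'tsev operation on $X$, and $X$ is a Mal'tsev space.

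The whole argument is essentially formal, so I do not anticipate a deep obstacle; the only point demanding care is reconciling the two readings of ``retract''. In the forward implication one must first certify that $i_X$ is an embedding and that $i_X(X)=M_0(X)$ really lies in $M_1(X)$ before the retraction $h|_{M_1(X)}$ can be claimed to have the stated form --- and it is precisely the section identity $h\circ i_X=\mathrm{id}_X$ that supplies the embedding. I would therefore state this embedding explicitly rather than leave it implicit.
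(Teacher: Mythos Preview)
Your proof is correct and follows essentially the same route as the paper's. The paper compresses your forward direction by citing Theorem~\ref{t1}\,(3) (which already shows, via the same universal-property argument you give, that a Mal'tsev space $X$ is a retract of $M(X)$) and then restricting the retraction to $M_1(X)$; for the converse the paper writes the new operation as $r\circ j_1\colon W_1(X)=X^3\to X$, which is exactly your map $r\circ p$ since $p=j_1$.
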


\begin{proof}
Any Mal'tsev space $X$ is a retract of $M(X)$ by Theorem~\ref{t1}\,(3). Clearly, the restriction of a retraction 
to any subspace containing the retract is a retraction as well.  

Conversely, if $r\colon M_1(X)\to X$ is a retraction, then $r\circ j_1\colon W_1(X)\to X$ is a Mal'tsev 
operation (recall that $W_1(X)=X^3$). 
\end{proof}

For a Tychonoff $X$, the set $M_1(X)$ is very much like the subset $G_1(X)=\{x\cdot y^{-1}\cdot z\in F(X): 
x,y,z\in X\}$ of the free group $F(X)$. Moreover, considering $F(X)$ as a Mal'tsev space with the operation 
$\mu(u,v,w)=u\cdot v^{-1}\cdot w$ and extending the identity embedding $X\to F(X)$ to a continuous homomorphism 
$h\colon M(X)\to F(X)$, we see that $h|_{M_1(X)}\colon M_1(X)\to G_1(X)$ is a continuous bijection. It is 
defined by $h(j_1(x,y,z))= x\cdot y^{-1}\cdot z$ for $x,y,z\in X$. If this bijection were a homeomorphism, then we 
could assert that a topological space is Mal'tsev if and only if it is a retract of the space $G_1(X)\subset 
F(X)$. However, this is not always the case. 

\begin{example}
The following example was constructed in \cite{3}.
Let $C$ be the Cantor space, and let $\{M\}\cup\{M_\alpha: \alpha<2^\omega\}$ be a partition of $C$ into 
subspaces homeomorphic to $C$. To construct such a partition, it suffices to recall that $C=\{0,1\}^{\mathbb N}$, 
index all non--identically zero sequences $(x_n)_{n\in \mathbb N}\in C$ as $(x_n^\alpha)_{n\in \mathbb N}$, and 
set $M=\{(x_n)_{n\in \mathbb N}\in C: x_{2n}=0\text{ for $n\in \mathbb N$}\}$ and 
$M_\alpha=\{(x_n)_{n\in \mathbb N}\in C: x_{2n}=x^\alpha_n\text{ for $n\in \mathbb N$}\}$. Let us strengthen 
the standard topology of the Cantor space by declaring the sets $M_\alpha$ to be closed and open, and let us 
denote the Cantor set with this strengthened topology by~$X$. It was proved in \cite{3} that $X$ is a  
Mal'tsev space but is not a retract of $G_1(X)$. By Proposition~\ref{p2}, $X$ is a retract of 
$M_1(X)$. Therefore, $M_1(X)$ is not homeomorphic to $G_1(X)$. 
\end{example} 

\begin{proposition}
\label{p3}
If $X$ is a Tychonoff space and  the multiplication map 
$$
\bar m_3= m_3|_{X\times X^{-1}\times X}\colon X\times X^{-1}\times X \to G_1(X), \quad (x,y^{-1},z)\mapsto 
x\cdot y^{-1}\cdot z, 
$$ 
is quotient, then $M_1(X)$ is homeomorphic to $G_1(X)$. If $X$ is in addition Mal'tsev, 
then $X$ is a retract of $G_1(X)$. 
\end{proposition}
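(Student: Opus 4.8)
The plan is to exploit the continuous bijection $h|_{M_1(X)}\colon M_1(X)\to G_1(X)$ already exhibited in the text and to promote it to a homeomorphism by a quotient-map cancellation argument; the hypothesis on $\bar m_3$ is exactly what is needed to feed this argument. First I would set up a commutative square. Let $\phi\colon X\times X\times X\to X\times X^{-1}\times X$ be the homeomorphism $(x,y,z)\mapsto(x,y^{-1},z)$ obtained from the fixed homeomorphism ${}^{-1}\colon X\to X^{-1}$. Recalling that $W_1(X)=X^3$ and that $h(j_1(x,y,z))=x\cdot y^{-1}\cdot z$, one checks directly that
$$
\bar m_3\circ\phi = (h|_{M_1(X)})\circ j_1,
$$
since both maps send $(x,y,z)$ to $x\cdot y^{-1}\cdot z$.

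Next I would observe that $\bar m_3\circ\phi$ is a quotient map: $\phi$ is a homeomorphism and $\bar m_3$ is quotient by hypothesis, so their composite is quotient. Hence $(h|_{M_1(X)})\circ j_1$ is quotient. Here $j_1=j|_{W_1}$ is continuous, being the restriction of the continuous canonical homomorphism $j\colon W(X)\to M(X)$ corestricted to the subspace $M_1(X)$, and $h|_{M_1(X)}$ is continuous. I would then apply the standard cancellation fact: if $g\circ f$ is quotient and both $f$ and $g$ are continuous, then $g$ is quotient (indeed $U$ is open whenever $g^{-1}(U)$ is open, because $f^{-1}(g^{-1}(U))=(g\circ f)^{-1}(U)$ is then open and $g\circ f$ is quotient; the reverse implication is continuity of $g$). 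With $f=j_1$ and $g=h|_{M_1(X)}$ this yields that $h|_{M_1(X)}$ is quotient. Being a continuous bijection that is quotient, $h|_{M_1(X)}$ is a homeomorphism, and therefore $M_1(X)$ is homeomorphic to $G_1(X)$.

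For the second assertion I would transport the retraction of Proposition~\ref{p2} across this homeomorphism. If $X$ is Mal'tsev, then by Proposition~\ref{p2} there is a retraction $r\colon M_1(X)\to X$. Since $h$ extends the identity embedding $X\to F(X)$, it fixes $X$ pointwise, so $h|_{M_1(X)}$ maps the subspace $X=M_0(X)\subset M_1(X)$ identically onto $X\subset G_1(X)$. Consequently $\tilde r = r\circ(h|_{M_1(X)})^{-1}\colon G_1(X)\to X$ is continuous and restricts to the identity on $X$, i.e., it is a retraction. Thus $X$ is a retract of $G_1(X)$.

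The argument is essentially bookkeeping around a single cancellation lemma, so I do not anticipate a serious obstacle. The two points deserving care are the continuity of $j_1$, which rests on viewing $W(X)$ as the absolutely free topological $\{\mu\}$-algebra and $j$ as the induced continuous homomorphism, and the clean identification of $\bar m_3\circ\phi$ with $(h|_{M_1(X)})\circ j_1$ through the homeomorphism $X\cong X^{-1}$; once these are in place, the quotient hypothesis on $\bar m_3$ does all the work.
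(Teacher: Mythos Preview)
Your argument is correct and follows the same route as the paper: identify $(h|_{M_1(X)})\circ j_1$ with $\bar m_3\circ\phi$, use the hypothesis to make the composite quotient, cancel to get that $h|_{M_1(X)}$ is a quotient bijection and hence a homeomorphism, and then transport the retraction from Proposition~\ref{p2}. The paper states the cancellation step in one line, whereas you spell out the standard lemma and the continuity of $j_1$, but there is no substantive difference.
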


\begin{proof}
Note that the map $h\circ j_1\colon X\times X\times X\to G_1(X)$ coincides with the composition of the 
homeomorphism $\varphi\colon X\times X\times X\to X\times X^{-1}\times X$ defined by 
$\varphi(x,y,z)=(x,y^{-1},z)$ and the multiplication map $\bar m_3$. Thus, if $\bar m_3$ 
is quotient, then so is $h\circ j_1$ and $h|_{M_1(X)}$, which implies that $M_1(X)$ is 
homeomorphic to $G_1(X)$ in this case; moreover, the homeomorphism is compatible with the restriction of the 
natural ``free group'' Mal'tsev operation to $M_1(X)$. This, together with Proposition~\ref{p2}, immediately 
implies that if $X$ is a Mal'tsev space for which $\bar m_3$ is quotient, then $X$ is a retract of $G_1(X)$. 
\end{proof} 

In \cite{RS} it was proved that if the multiplication map $m_3\colon (X\oplus\{e\}\oplus X^{-1})^3\to F_3(X)$ is 
quotient, then so is~$\bar m_3$.  

The following example shows that the condition that $\bar m_3$ (or $m_3$) is quotient is only sufficient but not 
necessary for a Tychonoff Mal'tsev space $X$ to be a retract of $G_1(X)$. 

\begin{example}
It was shown in \cite{FOT} that the map $\bar m_3$ is not quotient for the space $\mathbb Q$ of 
rational numbers with the standard topology. However, $\mathbb Q$ is a group and hence a Mal'tsev space with the 
group Mal'tsev operation $\mu(p,q,r)=p-q+r$. The map $r_G\colon G_1(\mathbb Q)\to \mathbb Q$ defined by $r_G(p\cdot 
q^{-1}\cdot r)=p-q+r$ is a retraction. 
\end{example}

Note that if $X$ is a Hausdorff compact space, then so are all spaces $(X\oplus\{e\}\oplus X^{-1})^k$, 
$W_k(X)$, and hence $M_k(X)$. Moreover, in this case, $M(X)$ is Tychonoff, so that all $M_k(X)$ are closed 
subspaces of $M(X)$. This implies the following proposition. 

\begin{proposition}\label{p-a}
For any Tychonoff space $X$, all sets $M_n(X)$ are closed in~$M(X)$.
\end{proposition}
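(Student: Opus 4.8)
The plan is to reduce the general Tychonoff case to the compact Hausdorff case, which is already settled in the remark preceding the statement, by exploiting the functoriality of the free Mal'tsev algebra. First I would dispose of the trivial case $X=\varnothing$, where $W(X)=M(X)=\varnothing$ and there is nothing to prove, and so assume $X\neq\varnothing$. Since $X$ is Tychonoff, the canonical map $e\colon X\to\beta X$ into its Stone--Čech compactification is a continuous injection into a compact Hausdorff space. By the remark preceding the proposition, for each $n$ the set $M_n(\beta X)$ is compact and $M(\beta X)$ is Tychonoff, hence Hausdorff, so every $M_n(\beta X)$ is closed in $M(\beta X)$.

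Next I would transport this closedness back to $M(X)$. By Theorem~\ref{t1}\,(4) the map $e$ induces a continuous homomorphism $h\colon M(X)\to M(\beta X)$ with $h\circ i_X=i_{\beta X}\circ e$. The heart of the argument is the identity
$$
M_n(X)=h^{-1}(M_n(\beta X)).
$$
Granting it, $M_n(X)$ is the preimage of a closed set under a continuous map and is therefore closed, which finishes the proof. The inclusion $M_n(X)\subseteq h^{-1}(M_n(\beta X))$ is the easy one: writing $j_X\colon W(X)\to M(X)$ and $j_{\beta X}\colon W(\beta X)\to M(\beta X)$ for the canonical projections and $e_*\colon W(X)\to W(\beta X)$ for the relabelling of variables induced by $e$, one has $h\circ j_X=j_{\beta X}\circ e_*$ and $e_*(W_n(X))\subseteq W_n(\beta X)$, so $h(M_n(X))\subseteq M_n(\beta X)$.

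The reverse inclusion is where the real content lies, and I would handle it with a purely algebraic retraction that need not be continuous. Because $e$ is injective and $X\neq\varnothing$, there is a set map $p\colon\beta X\to X$ with $p\circ e=\mathrm{id}_X$ (send $\beta X\setminus X$ to any fixed point of $X$). Regarding $M(X)$ and $M(\beta X)$ as the abstract free Mal'tsev algebras on $X$ and $\beta X$ by Theorem~\ref{t1}\,(2), the map $p$ induces an algebra homomorphism $P\colon M(\beta X)\to M(X)$ with $P\circ j_{\beta X}=j_X\circ p_*$, where $p_*$ relabels variables via $p$. Since $p\circ e=\mathrm{id}_X$ induces $P\circ h=\mathrm{id}_{M(X)}$, and since $p_*(W_n(\beta X))\subseteq W_n(X)$ gives $P(M_n(\beta X))\subseteq M_n(X)$, it follows that whenever $h(w)\in M_n(\beta X)$ we have $w=P(h(w))\in M_n(X)$. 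This yields $h^{-1}(M_n(\beta X))\subseteq M_n(X)$ and hence the desired identity.

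The step I expect to be the main obstacle is the level bookkeeping behind $p_*(W_n(\beta X))\subseteq W_n(X)$ (and likewise $e_*(W_n(X))\subseteq W_n(\beta X)$): one must verify, from the inductive construction of $W(\,\cdot\,)$, that relabelling the variables of a term never raises its level $n$, even when $p$ identifies distinct variables, and that these relabellings descend correctly through the congruence $\sim$ so that $P\circ j_{\beta X}=j_X\circ p_*$ holds. This is a routine but slightly delicate verification; once the functorial homomorphism $h$ and the set-theoretic retraction $p$ are in place, everything else is formal.
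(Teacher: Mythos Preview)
Your proof is correct and follows essentially the same route as the paper: reduce to a Hausdorff compactification $bX$, where the $M_n(bX)$ are compact and hence closed in the Tychonoff space $M(bX)$, and then pull closedness back to $M(X)$. The paper phrases the pullback as ``the subalgebra $\tilde M(X)\subset M(bX)$ generated by $X$ is $M(X)$ with a coarser topology, and $M_n(X)=M_n(bX)\cap\tilde M(X)$''; your phrasing via the continuous homomorphism $h$ and the identity $M_n(X)=h^{-1}(M_n(\beta X))$ is equivalent, and your set-theoretic retraction $P$ is precisely the argument needed to justify the nontrivial inclusion $M_n(bX)\cap\tilde M(X)\subseteq M_n(X)$ that the paper asserts without proof.
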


\begin{proof}
Let $bX$ be any Hausdorff compactification of $X$. Then the subalgebra $\tilde M(X)$ of $M(bX)$ generated by $X$ 
is a topological Mal'tsev algebra containing $X$ as a subspace. It coincides with $M(X)$ as an abstract 
algebra but has a coarser topology. Each set $M_n(X)$ coincides with $M_n(bX)\cap \tilde M(X)$; therefore, it is 
closed in $\tilde M(X)$ and, therefore, in $M(X)$.
\end{proof}

Note also that, in the case where $X$ is compact, the maps $m_k$ are $j_k$ are quotient and $M_1(X)$ is 
homeomorphic to $G_1(X)$, so that any compact Mal'tsev algebra is a retract of $G_1(X)$. Moreover, any 
compact Hausdorff Mal'tsev algebra is a retract of the whole free group $F(X)$. This is implied by the 
following general theorem. 

\begin{theorem}[{see \cite{retract} and \cite{3}}]
\label{t3}
If $X$ is a Tychonoff Mal'tsev space such that the free topological group $F(X)$ has the direct limit 
topology with respect to the decomposition $F(X)=\bigcup_{n\ge 0}F_n(X)$ and all multiplication maps $m_n$ are 
quotient (that is, $m$ is quotient), then $X$ is a retract of~$F(X)$. 
\end{theorem}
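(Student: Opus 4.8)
The plan is to produce a continuous retraction $r\colon F(X)\to X$ directly, exploiting the two features granted by the hypothesis: that $F(X)$ carries the direct limit topology with respect to $F(X)=\bigcup_{n\ge 0}F_n(X)$ and that every multiplication map $m_n$ is quotient. Since $X$ sits in $F(X)$ as the set of one-letter positive words, it suffices to build a compatible family of continuous maps $r^{(n)}\colon F_n(X)\to X$ with $r^{(n+1)}|_{F_n(X)}=r^{(n)}$ and $r^{(n)}|_{X}=\mathrm{id}_X$; the direct limit property then guarantees that the union $r=\bigcup_n r^{(n)}$ is continuous on $F(X)$, and $r|_X=\mathrm{id}_X$ makes it a retraction. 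Thus the whole problem reduces to an induction on word length in which the Mal'tsev operation $\mu$ supplies the values and each $m_n$ being quotient is used to verify continuity one length at a time.

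For the base of the induction I would set $r^{(1)}=\mathrm{id}$ on $X$, $r^{(1)}(x^{-1})=x$ via the fixed homeomorphism $X^{-1}\cong X$, and $r(e)=x_0$ for a fixed $x_0\in X$. The first substantial step is the level of $F_3(X)$, where the subset $G_1(X)=\{x\cdot y^{-1}\cdot z\}$ appears: here I would define $r$ by $x\cdot y^{-1}\cdot z\mapsto \mu(x,y,z)$. The Mal'tsev identities $\mu(x,x,z)=z$ and $\mu(x,y,y)=x$ are exactly what make this assignment invariant under the cancellations $x\cdot x^{-1}=e$ and $y^{-1}\cdot y=e$, so that $r$ is well defined and agrees with $r^{(1)}$ on shorter words. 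Continuity of this partial map is where the hypothesis first bites: since $m_3$ is quotient, so is $\bar m_3$ by \cite{RS}, hence $M_1(X)$ is homeomorphic to $G_1(X)$ by Proposition~\ref{p3}, and the continuity of $\mu$ transports to a continuous retraction on $G_1(X)$.

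The inductive extension from $F_n(X)$ to $F_{n+1}(X)$ is carried out on the word layer $(X\oplus\{e\}\oplus X^{-1})^{n+1}$: I would define a continuous map $\widehat r_{n+1}$ on this layer by a $\mu$-fold that collapses a word to an element of $X$, arranged so that it extends the folds already defined on shorter words and depends, for its well-definedness, only on the identities $(\mathrm M)$. Because the exponent-sum map $F(X)\to\mathbb Z$ is a continuous homomorphism into a discrete group, the words split into clopen pieces according to exponent sum; on the pieces of exponent sum different from $1$ one may simply take $\widehat r_{n+1}\equiv x_0$, and the real content lies on the exponent-sum-one words, which are precisely the ones meeting $X$. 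Once $\widehat r_{n+1}$ is seen to be invariant under every reduction that carries an $(n{+}1)$-letter word into $F_{\le n}(X)$, it factors through $m_{n+1}$; and since $m_{n+1}$ is quotient, the factored map $r^{(n+1)}\colon F_{n+1}(X)\to X$ is continuous and restricts to $r^{(n)}$.

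The hard part will be the combinatorial heart of the inductive step: exhibiting a single $\mu$-fold on signed words that is simultaneously total (so that inverse letters and $e$ receive values), continuous on each layer, compatible across layers, and invariant under cancellations occurring at \emph{arbitrary} positions. The difficulty is that $X$ is only assumed to be Mal'tsev and need not be a heap, so no associativity of $\mu$ is available and the bracketing of the fold cannot be rearranged freely; the cancellation $u\cdot x\cdot x^{-1}\cdot v\mapsto u\cdot v$ must be absorbed locally using solely $\mu(x,x,\cdot)=\cdot$ and $\mu(\cdot,y,y)=\cdot$, even when the cancelling pair straddles the blocks chosen by the fold. This is the technical core executed in \cite{retract} and \cite{3}. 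It is worth emphasizing where each part of the hypothesis is spent: the quotientness of the individual maps $m_n$ is what converts the word-level, layerwise continuity of the folds into continuity of the maps $r^{(n)}$ on the groups $F_n(X)$, while the direct limit property is used only at the very end, to glue the continuous pieces $r^{(n)}$ into a single continuous retraction on $F(X)$.
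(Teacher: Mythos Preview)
The paper does not supply its own proof of Theorem~\ref{t3}; the result is stated with attributions to \cite{retract} and \cite{3}, and no argument appears in the text. Your proposal, which explicitly defers the combinatorial heart of the induction to those same two references, therefore sits at essentially the same level of detail as the paper's treatment, and there is no in-paper proof to compare it against.

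That said, your architecture is correct and is indeed the one used in the cited sources: the direct-limit hypothesis reduces the problem to building a compatible tower of continuous maps $r^{(n)}\colon F_n(X)\to X$, and the quotientness of each $m_n$ lets one descend such a map from a continuous $\mu$-fold on the word layer $(X\oplus\{e\}\oplus X^{-1})^n$. You have also located the genuine obstruction accurately: without heap associativity, neither the left fold $\mu(\mu(\dots),x_{2k},x_{2k+1})$ nor the right fold $\mu(x_1,x_2,\mu(\dots))$ survives \emph{all} interior cancellations (each handles one of the two cancellation types $xx^{-1}$, $x^{-1}x$ at interior positions but not the other), so a more intricate inductive scheme is required, and that is precisely what \cite{retract} and \cite{3} provide. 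One small inconsistency to fix: your base-case choice $r^{(1)}(x^{-1})=x$ clashes with your later blanket rule that words of exponent sum $\ne 1$ go to the basepoint $x_0$; the latter is the correct convention, so $r^{(1)}(x^{-1})$ should be $x_0$.
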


Topological spaces being retracts of topological groups are said to be \emph{retral}. 
As mentioned above, the assumptions of Theorem~\ref{t3} hold, e.g., for all Tychonoff 
$k_\omega$-spaces, that is, direct limits of countably many compact subspaces \cite{MMO}. Therefore, 
$k_\omega$-spaces are retral. Some other sufficient conditions for being retral are given in \cite{3}. 
Reznichenko and Uspenskii also proved that the Mal'tsev operation on a pseudocompact Mal'tsev space $X$ can be 
extended to a continuous Mal'tsev operation on $\beta X$, which implies that all pseudocompact Mal'tsev spaces 
are retral~\cite{RU}. 

Of course, it is also of interest when the free Mal'tsev algebra $M(X)$ is the topological quotient of the 
absolutely free algebra $W(X)$. In \cite[Lemma in \S\,5]{Maltsev1957} 
Mal'tsev essentially proved the following theorem: \emph{Suppose that $X$ is a $k_\omega$-space, that is, the 
direct limit of its compact subspaces $X_n$, and $A$ is a Hausdorff topological algebra of at most countable 
signature generated by $X$. Then $A$ decomposes into the union of its subspaces of the form 
$p(X_n,\dots,X_n)=\{p(x_1, \dots, x_k): x_i\in X_n\}$, where $p$ is a polynomial on $A$, and all operations of 
$A$ are continuous in the direct limit topology with respect to this decomposition.} It follows immediately that, 
for a Hausdorff $k_\omega$-space $X$, $M(X)$ is the direct limit of its subspaces $M_k(X_n)$ and hence of 
$M_n(X)$. Therefore, $M(X)$ is itself a $k_\omega$-space (which implies, in particular, that it is Tychonoff). It 
is also easy to show that the map $j$ is quotient in this case (because so are all maps $p\colon X_n\times \dots 
\times X_n\to p(X_n,\dots,X_n)$, being continuous maps of Hausdorff compact spaces). 

Apparently, generalizations of this theorem similar to those in the case of free topological groups can be 
proved, but nothing fundamentally new should be expected. To understand why, consider the sets 
$$
G_n(X)= \{x_1\cdot x_2^{-1}\cdot x_3\cdot \dots \cdot x_{2n}^{-1}\cdot x_{2n+1}: x_i\in X\}\subset 
F_{2n+1}(X), \quad n\ge 0,
$$ 
and $G(X)=\bigcup _{n\ge 0} G_n(X)$. There is a natural Mal'tsev operation on $G(X)$, which is defined by the 
rule $\mu(a,b,c)=a\cdot b^{-1}\cdot c$. Note that this operation satisfies the associativity-type condition
$$
\mu(\mu(a,b,c),d,e)=\mu(a,\mu(d,c,b),e)=\mu(a,b,\mu(c,d,e)),\eqno(*)
$$
so that $G(X)$ is a \emph{heap}. 
Heaps differ from groups only in that they have no fixed identity element. Choosing any element $x_*\in 
G(X)$ and setting $u*v=\mu(u,x_0,v)$, we obtain a multiplication $*$ on $G(X)$. The role of the identity element 
is played by $x_*$, and the element inverse to $u$ is $\mu(x_*, u, x_*)$. Thus, $G(X)$ with the three derived 
operations thus defined and the topology inherited from $F(X)$ is a topological group. 

\begin{proposition}
\label{p4}
Let $\mathscr V$ be a full topological subvariety of $\mathscr M$, i.e., a class of topological Mal'tsev algebras 
which is itself a full topological variety. Then, for any topological space $ X$, the free topological algebra 
$A(X)$ on $X$ in $\mathscr V$ is a topological quotient of $M(X)$. In particular, if $X$ is Tychonoff, then 
the heap $G(X)$ (with its Mal'tsev operation $\mu$ satisfying condition $(*)$) is a topological quotient 
of~$M(X)$. 
\end{proposition}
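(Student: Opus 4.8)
The plan is to use the universal property of $M(X)$ to produce the comparison homomorphism and then to identify its quotient topology by invoking fullness. First I would note that $A(X)$ exists because $\mathscr V$ is a topological variety, and that $A(X)\in\mathscr V\subseteq\mathscr M$. Writing $\iota\colon X\to A(X)$ for the canonical continuous map, the universal property of the free Mal'tsev algebra $M(X)$ yields a unique continuous homomorphism $h\colon M(X)\to A(X)$ with $\iota=h\circ i_X$. Since $h(i_X(X))=\iota(X)$ generates $A(X)$ and the image of $h$ is a subalgebra, $h$ is surjective. The goal is then to show that $h$ is a topological quotient map, that is, that the topology $\tau_{A(X)}$ of $A(X)$ coincides with the quotient topology $\tau_q$ determined by $h$.

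The core step is to analyze $(A(X),\tau_q)$, where $A(X)$ now denotes the underlying abstract algebra. Because $\mu$ is a Mal'tsev term, $\mathscr M$ is congruence-permutable by Theorem~A, so Theorem~B and its corollary apply to the congruence $\ker h$ on $M(X)$: the operations of $M(X)/\ker h$ are continuous for $\tau_q$ and the projection $\pi\colon M(X)\to(A(X),\tau_q)$ is a continuous open homomorphism. Thus $(A(X),\tau_q)$ is a topological Mal'tsev algebra whose underlying abstract algebra is that of $A(X)$, hence lies in $\overline{\mathscr V}$. Here the hypothesis that $\mathscr V$ is \emph{full} is decisive: it forces $(A(X),\tau_q)\in\mathscr V$, since membership in a full variety depends only on the underlying abstract algebra. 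The map $\pi\circ i_X\colon X\to(A(X),\tau_q)$ is continuous and, as a map of sets, equals $\iota$; so by the universal property of the free algebra $A(X)$ in $\mathscr V$ there is a continuous homomorphism $g\colon A(X)\to(A(X),\tau_q)$ extending it. Since $A(X)$ is generated by $\iota(X)$ and $g$ fixes these generators, $g$ is the identity map of the underlying set. Continuity of $g$ gives $\tau_q\subseteq\tau_{A(X)}$, while continuity of $h$ gives $\tau_{A(X)}\subseteq\tau_q$; hence the two topologies coincide and $A(X)=M(X)/\ker h$ is a topological quotient of $M(X)$, proving the first assertion. I expect this fullness step, together with the appeal to the corollary of Theorem~B guaranteeing that the quotient is again a topological Mal'tsev algebra, to be the main point of the argument.

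For the ``in particular'' part, the plan is to recognize $G(X)$ as the free topological heap on $X$ and then to apply the first assertion to the full subvariety $\mathscr H\subseteq\mathscr M$ of topological heaps (heaps are defined by the identities $(\mathrm M)$ and $(*)$, so $\mathscr H$ is a full variety). That $G(X)$, with the subspace topology from $F(X)$, is the free topological heap is verified through the associated-group trick: given any continuous $f\colon X\to H$ with $H$ a topological heap, fix $e\in H$ and equip $H$ with the continuous operations $u*v=\mu(u,e,v)$ and $u^{-1}=\mu(e,u,e)$, making $H_e$ a topological group; by the universal property of $F(X)$, $f$ extends to a continuous homomorphism $\tilde f\colon F(X)\to H_e$, and since $\mu(a,b,c)=a*b^{-1}*c$ holds in $H_e$, the restriction $\tilde f|_{G(X)}\colon G(X)\to H$ is a continuous heap homomorphism extending $f$. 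As $G(X)$ is generated by $X$ and $X$ embeds in it, $G(X)$ satisfies the defining universal property and is therefore the free topological heap on the Tychonoff space $X$. Applying the first assertion with $\mathscr V=\mathscr H$ gives that $G(X)$ is a topological quotient of $M(X)$. The only delicate point here is the identity $\mu(a,b,c)=a*b^{-1}*c$ in $H_e$, which follows from the heap axioms $(*)$ and $(\mathrm M)$.
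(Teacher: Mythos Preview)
Your argument for the main assertion is essentially the paper's: obtain the surjective continuous homomorphism $h\colon M(X)\to A(X)$ from the universal property of $M(X)$, use the corollary of Theorem~B to see that the quotient $M(X)/\ker h$ is a topological Mal'tsev algebra, invoke fullness of $\mathscr V$ to place it in $\mathscr V$, and then apply the universal property of $A(X)$ to the continuous map $X\to M(X)/\ker h$ to conclude that the two topologies agree. The paper adds the remark that one may assume $\mathscr V$ contains a non-one-element algebra (so that $i$ is injective and $A(X)$ is freely generated by $i(X)$), but your version does not need injectivity of $\iota$: the fact that $g$ fixes a generating set already forces $g$ to be the identity.

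For the ``in particular'' clause your route differs from the paper's. The paper does not verify inside this proof that $G(X)$ is the free topological heap; it defers that to a subsequent Remark, where the identification is made by showing (via the proof of Proposition~\ref{p4}) that the free topological heap carries the finest heap topology extending the topology of $X$, and then comparing with $G(X)$. You instead check the universal property of $G(X)$ directly, turning an arbitrary topological heap $H$ into a topological group $H_e$ and extending through $F(X)$; the identity $\mu(a,b,c)=a*b^{-1}*c$ in $H_e$ that you flag is exactly what makes the restriction $\tilde f|_{G(X)}$ a heap homomorphism. Your approach is more self-contained and avoids the forward reference; the paper's approach has the advantage of extracting, as a byproduct, the general principle that the free algebra in any full subvariety carries the finest compatible topology extending that of $X$.
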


\begin{proof}
Let $i\colon X\to A(X)$ be the map in Definition~\ref{d2} (of the free topological algebra $A(X)$). The case 
where $\mathscr V$ contains only the one-element algebra is trivial; thus, we will assume that $\mathscr V$ 
contains non-one-element algebras, in which case $A(X)$ is freely generated by $i(X)$ (see Remark~\ref{r1}).   
Since $i$ is continuous and $A(X)\in \mathscr M$, it follows that there exists a continuous homomorphism $h\colon 
M(X)\to A(X)$ such that $i=h\circ i_X$. It is surjective, because $i_X$ is injective (by Theorem~\ref{t1}\,(2)) 
and $A(X)$ is generated by $i(X)$. Therefore, as an abstract algebra, $A(X)$ is the quotient $M(X)/\ker h$. 
According to the corollary of Theorem~B, the topological quotient $\tilde A(X)= M(X)/\ker h$ is a Mal'tsev 
topological algebra. It is isomorphic to $A(X)$ as an abstract algebra (and hence all identities defining the 
variety $\mathscr V$ hold in $\tilde A(X)$, so that $\tilde A(X)\in \mathscr V$), and its topology is finer than 
that of $A(X)$. Thus, the map $i$ treated as a map from $X$ to $\tilde A(X)$ remains continuous. Hence there 
exists a continuous homomorphism $f\colon A(X)\to \tilde A(X)$ for which $i=f\circ i$. Since $i$ is injective and 
both $A(X)$ and $\tilde A(X)$ are freely generated by $i(X)$, it follows that $f$ is the identity homomorphism. 
Its continuity implies that the topology of $A(X)$ is finer than that of $\tilde A(X)$, i.e., these topologies 
coincide, which means that $A(X)$ is a topological quotient of~$M(X)$.
\end{proof}

\begin{proposition}
If $X$ is a Tychonoff space for which $M(X)$ is the direct limit of its subspaces $M_n(X)$, $n\ge 0$, then $G(X)$ 
is the direct limit of its subspaces $G_n(X)$, $n\ge 0$. 
\end{proposition}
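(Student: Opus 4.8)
The plan is to transport the direct-limit property along the quotient homomorphism $h\colon M(X)\to G(X)$ furnished by Proposition~\ref{p4}. Since $G(X)$ is a topological quotient of $M(X)$, the map $h$ is a continuous open surjection and hence quotient. I want to show that a set $U\subseteq G(X)$ is open whenever $U\cap G_n(X)$ is open in $G_n(X)$ for every $n$. Because $h$ is quotient, $U$ is open in $G(X)$ if and only if $h^{-1}(U)$ is open in $M(X)$; and because $M(X)$ is, by hypothesis, the direct limit of the subspaces $M_n(X)$, the set $h^{-1}(U)$ is open in $M(X)$ if and only if $h^{-1}(U)\cap M_n(X)$ is open in $M_n(X)$ for every $n$. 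Everything therefore reduces to deducing this last family of conditions from the assumption on the sets $U\cap G_n(X)$.

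The bridge between the two filtrations is that $h$ carries each $M_n(X)$ into a single $G_{\phi(n)}(X)$ with $\phi(n)$ finite. By construction $h$ restricts to the identity on $X=M_0(X)=G_0(X)$ and is a homomorphism, so for a Mal'tsev term $t$ of height at most $n$ the element $h(j(t))$ is the value obtained by evaluating $t$ in $G(X)$ under the free-group operation $\mu(a,b,c)=a\cdot b^{-1}\cdot c$. A straightforward induction on the height shows that this value is a product $w_1\cdot w_2^{-1}\cdot w_3\cdots w_{2l+1}$ with all $w_i\in X$, whose length equals the number of leaves of $t$ and is therefore odd and at most $3^n$; the point of the induction is simply that $\mu(a,b,c)=a\cdot b^{-1}\cdot c$ preserves this alternating odd-length shape. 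By the very definition of $G_l(X)$ such a product lies in $G_l(X)$ with $l\le (3^n-1)/2$, so that $h(M_n(X))\subseteq G_{\phi(n)}(X)$ for $\phi(n)=(3^n-1)/2$. Only the finiteness of $\phi(n)$ will matter.

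With this containment the argument closes formally. Since $G_{\phi(n)}(X)$ is a subspace of $G(X)$ and $h(M_n(X))\subseteq G_{\phi(n)}(X)$, the subspace topology that $h(M_n(X))$ inherits from $G(X)$ coincides with the one it inherits from $G_{\phi(n)}(X)$. The hypothesis makes $U\cap G_{\phi(n)}(X)$ open in $G_{\phi(n)}(X)$, whence $U\cap h(M_n(X))=\bigl(U\cap G_{\phi(n)}(X)\bigr)\cap h(M_n(X))$ is open in $h(M_n(X))$. As the restriction $h|_{M_n(X)}\colon M_n(X)\to h(M_n(X))$ is continuous, the set $h^{-1}(U)\cap M_n(X)=\bigl(h|_{M_n(X)}\bigr)^{-1}\bigl(U\cap h(M_n(X))\bigr)$ is open in $M_n(X)$, as required. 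Reading the chain of equivalences from the first paragraph backwards then yields that $U$ is open in $G(X)$.

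I expect the only genuine content to be the containment $h(M_n(X))\subseteq G_{\phi(n)}(X)$, that is, the bookkeeping showing that height-$\le n$ Mal'tsev terms evaluate in $G(X)$ to words of bounded (odd) length; this is where one checks that the free-group Mal'tsev operation neither violates the alternating normal form nor inflates the length past $3^n$. The remaining steps are the standard interaction between quotient maps and direct limits, using only the transitivity of the subspace topology.
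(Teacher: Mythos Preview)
Your proof is correct and follows essentially the same route as the paper: use the quotient homomorphism $h\colon M(X)\to G(X)$ from Proposition~\ref{p4} and push the direct-limit property forward along it. The paper phrases the transport lemma with closed sets and the images $h(M_n(X))$, leaving the passage to the filtration $\{G_n(X)\}$ implicit; your version, by explicitly establishing the containment $h(M_n(X))\subseteq G_{\phi(n)}(X)$, makes that step visible and is in that respect slightly more complete.
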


\begin{proof}
According to Proposition~\ref{p4}, $G(X)$ is the image of $M(X)$ under a quotient map. Now the required assertion 
follows from the simple fact that if a space $Y$ is the direct limit of its closed subspaces $Y_n$, $n\in 
\omega$, and $Z$ is the image of $Y$ under a quotient map $f$, then $Z$ is the direct limit of its subspaces 
$f(Y_n)$, $n\in \omega$. Indeed, given any set $A\subset Z$ such that $A\cap f(Y_n)$ is closed in $f(Y_n)$ for 
each $n\in \omega$, every intersection $f^{-1}(A)\cap Y_n= f^{-1}(f(A)\cap f(Y_n))\cap Y_n$ is closed in 
$f^{-1}(f(Y_n))\supset Y_n$ and hence in $Y_n$. Therefore, $f^{-1}(A)$ is closed in $Y$ and $A$ is closed in $Z$ 
(because $f$ is quotient).
\end{proof}

\begin{remark}
The heap $G(X)$ is freely generated by the set $X$, because this is algebraically the quotient of the free 
algebra $M(X)$ by the associativity relation. It is seen from Proposition~\ref{p4} and its proof 
that, for a Tychonoff space $X$,  $G(X)$ is the free topological heap on $X$. Indeed, if $G_f(X)$ is the 
free topological heap on $X$ (it exists by Theorem~\ref{t1}), then the identity embedding $X\to G(X)$ must 
extend to a continuous homomorphism $G_f(X)\to G(X)$, and this homomorphism must be an isomorphism, because 
both $G(X)$ and $G_f(X)$ are freely generated by $X$. However, it was shown in the proof of Proposition~\ref{p4} 
that the topology of $G(X)$ is strongest among all topologies with respect to which the Mal'tsev 
operation on $G(X)$ is continuous and $X$ is contained in $G(X)$ as a subspace. Therefore, $G_f(X)=G(X)$.
\end{remark}

\begin{remark}
A topological space $X$ is a retract of a topological group if and only if $X$ is a retract of $G(X)$. Indeed, 
$G(X)$ is homeomorphic to a topological group, being a topological heap; this implies sufficiency. To prove 
necessity, suppose that $G$ is a topological group and $r\colon G\to X$ is a retraction. Then $r(G)=X$ is a 
subspace of $G$, and since $G$ carries the derived associative Mal'tsev (=~heap) operation $\mu(u,v,w)=u\cdot 
v^{-1}\cdot w$, it follows that the identity homomorphism $X\to G$ extends to a $\mu$-homomorphism 
$h\colon G(X)\to G$. The composition $r\circ h$ is a retraction $G(X)\to X$. 
\end{remark}

\section*{Acknowledgments}

The authors are very grateful to Evgenii Reznichenko for fruitful discussions.


\begin{thebibliography}{00}

\bibitem{Choban1985}
M. M. Choban, ``On the theory of topological algebraic systems,'' Trudy Moskov. Mat. Obshch., 1985, 
Vol.~48, pp.~106--149.

\bibitem{Choban1993}
M. M. Choban, ``Some topics in topological algebra,'' Topology Appl., 1993, Vol.~54, Nos.~1--3, pp.~183--202.

\bibitem{Coleman}
J. P. Coleman, ``Topological equivalents to $n$-permutability,'' Algebra Univers., 1997, Vol.~38, pp.~200--209.

\bibitem{Engelking}
R. Engelking,
\textit{General Topology}, 2nd ed.,
Heldermann-Verlag, Berlin, 1989.

\bibitem{FOT}
T. Fay, E. Ordman, and B. V. S. Thomas, 
``The free topological group over rationals,'' Gen.
Topol. Appl., 1979, Vol.~10, No.~1, pp.~33--47.

\bibitem{3}
P. M. Gartside, E. A. Reznichenko, and O. V. Sipacheva, 
``Mal'tsev and retral spaces,''
Topol. Appl., 1997, Vol.~80,  pp.~115--129. 

\bibitem{Graev}
 M.~I.~Graev,
``Theory of topological groups. I. Norms and metrics on groups. Complete groups. Free topological groups,''
Uspekhi Mat. Nauk, 
1950,
Vol.~5, No.~2(36), pp.~3--56.

\bibitem{Gratzer}
G. Gr\"atzer, 
\textit{Universal Algebra},
Springer, New York, 2008.

\bibitem{Gulko}
A. S. Gul'ko, ``Rectifiable spaces,'' Topol. Appl., 1996, Vol.~68, pp.~107--112. 

\bibitem{KW}
S. M. Karnik and S. Willard, 
``Natural covers and $R$-quotient maps,''
Canad. Math. Bull., 1982, Vol.~25, No.~4, pp.~456--461.

\bibitem{KS}
K. Kearnes, L. Sequeira, ``Hausdorff properties of topological algebras,'' 
Algebra Univers., 2002, Vol.~47, pp.~343--366.

\bibitem{Maltsev1957}
A.~I.~Mal'cev, ``Free topological algebras,'' Izv. Akad. Nauk SSSR. Ser. Mat., 1957, Vol.~21, pp.~171--98;  
English transl.: Amer. Math. Soc. Transl. Ser. 2, 1961, Vol.~17, pp.~173--200.

\bibitem{Maltsev1954}
A.~I.~Mal'tsev, ``On the general theory of algebraic systems,''
Mat. Sb. (N.S.), 
1954,
Vol.~35(77), 
No.~1, pp.~3--20

\bibitem{MMO}
J. Mack, S. A. Morris and E. T. Ordman, ``Free topological groups and the projective dimension
of locally compact Abelian groups,'' Proc. Amer. Math. Soc., 1973, Vol.~40, pp.~303--308.

\bibitem{Porst}
H.-E. Porst, 
``On free topological algebras,'' Cahiers de topologie et g\'eom\'etrie diff\'erentielle cat\'egoriques, 1987,
Vol.~28, No.~3, pp.~235--253.

\bibitem{Porst-err}
H.-E. Porst, 
Erratum to ``On free topological algebras,'' Cahiers de topologie et g\'eom\'etrie diff\'erentielle 
cat\'egoriques, 1988, Vol.~29, No.~2, p.~163.

\bibitem{Protasov}
I.~V.~Protasov, 
``Varieties of topological algebras,''
Sibirsk. Mat. Zh., 1984, Vol.~Vol.~25, No.~5, pp.~125--134; English transl.:
Siberian Math. J., 1984, Vol.~25, No.~5, pp.~783--790.

\bibitem{RS}
E.~A.~Reznichenko and O.~V.~Sipacheva,
Quotient Mappings to Words of Bounded
Length in Free Topological 
Groups\rm,
in \it General Topology. Mappings, Products, and Dimension 
of Spaces\rm, 
Moscow: Izd. Mosk. Univ., 
1995, pp.~98--119 [in Russian]. 

\bibitem{RU}
E. A. Reznichenko and V. V. Uspenskii, ``Pseudocompact Mal'tsev spaces,'' Topology Appl., 1998, 
Vol.~86, pp.~83--104.

\bibitem{retract}
O.V. Sipacheva,
Compacta with a continuous
``Mal'tsev operation and retracts of topological
groups,'' Vestnik Moskov. Univ. Ser. I Mat. Mekh., 1991, No.~1, pp.~33--36. English transl.: Moscow Univ. Math. 
Bull., 1991, Vol.~46, pp.~22--24.

\bibitem{Swierczkowski}
S. \'Swierczkowski, ``Topologies in free algebras,'' Proc. London Math. Soc. Ser.~3, 1964, Vol.~14, pp.~566--576.

\bibitem{Taylor}
W. Taylor, ``Varieties of topological algebras,'' J. Austral. Math. Soc., 1977, Vol.~23 (Ser.~A), pp.~207--241.

\bibitem{Usp}
V. V. Uspenskij, 
``The Mal'tsev operation on countably compact spaces,''
Comment. Math. Univ. Carolinae, 1989, Vol.~30, No.~2, pp.~395--402.

\end{thebibliography}
\end{document}